\newtheorem{thm}{Theorem}[section]
\newtheorem{lm}[thm]{Lemma}
\newtheorem{conj}[thm]{Conjecture}
\newtheorem{pro}[thm]{Proposition}
\newtheorem{ex}[thm]{Example}
\newtheorem{rmk}[thm]{Remark}
\newtheorem{open}[thm]{Open Problem}
\newtheorem{constr}[thm]{}
\newenvironment{pf}
	{\vskip 0.7em\par\noindent
	{\bf Proof.}
	}
	{\qed
	\vskip 0.7em\par
	}
	\def\uplodadedmylogic{}
\newcommand{\str}[1]{\StrLen{#1}[\tmplength]\ifthenelse{1=\tmplength{}}{\mathcal{#1}}{\underline{#1}}}
\newcommand{\A}{\str{A}}
\newcommand{\B}{\str{B}}
\newcommand{\stru}[2]{\lara{#1,#2}} 
\newcommand{\lang}[1]{\lara{#1}} 
\newcommand{\thn}[1]{\mathrm{#1}} 
\newcommand{\PrA}{\thn{Pr}}
\newcommand{\DeLOs}[2]{{\tiny\ifthenelse{\equal{#1}{#2}}{\thn{DeLO}^{#1}}{\ifthenelse{\equal{#2}{}}{\thn{DeLO}^{#1}}{\thn{DeL0}^{\vpair{$#1$}{$#2$}}}}}}
\newcommand{\lekv}{\leftrightarrow}
\newcommand{\limp}{\rightarrow}
\renewcommand{\land}{\,\&\,}
\newcommand{\IFF}{\Leftrightarrow}
\newcommand{\PAK}{\Rightarrow}
\newcommand{\KAP}{\Leftarrow}
\newcommand{\Th}[1]{Th(#1)}
\newcommand{\landef}[2][{}]{\csname Ln#2\endcsname{#1} = \csname L#2\endcsname{#1}}
\newcommand{\landefl}[2][{}]{$\landef[#1]{#2}$, kde \csname Ld#2\endcsname{#1}}
\def\LnfGrpA/{aditivní jazyk teorie grup}
\def\LnFGrpA/{Aditivní jazyk teorie grup}
\def\LnfOrd/{jazyk teorie uspořádání}
\def\LnFOrd/{Jazyk teorie uspořádání}
\def\LnfPrv/{výrokový jazyk s množinou prvovýroků $\Prv$}
\def\LnFPrv/{Výrokový jazyk s množinou prvovýroků $\Prv$}
\def\LnfAr/{jazyk aritmetiky}
\def\LnFAr/{Jazyk aritmetiky}
\def\LnfArZ/{jazyk $\Z$-aritmetiky}
\def\LnFArZ/{Jazyk $\Z$-aritmetiky}
\def\LnfArPA/{jazyk Peanovy aritmetiky}
\def\LnFArPA/{Jazyk Peanovy aritmetiky}
\def\LnfArZPA/{jazyk $\Z$-Peanovy aritmetiky}
\def\LnFArZPA/{Jazyk $\Z$-Peanovy aritmetiky}
\def\LnfArA/{aditivní jazyk aritmetiky}
\def\LnFArA/{Aditivní jazyk aritmetiky}
\def\LnfArAA/{jazyk aditivní aritmetiky}
\def\LnFArAA/{Jazyk aditivní aritmetiky}
\def\LnfArZAA/{jazyk $\Z$-aditivní aritmetiky}
\def\LnFArZAA/{Jazyk $\Z$-aditivní aritmetiky}
\def\LnfArLA/{jazyk lineární aritmetiky}
\def\LnFArLA/{Jazyk lineární aritmetiky}
\def\LnfArZLA/{jazyk $\Z$-lineární aritmetiky}
\def\LnFArZLA/{Jazyk $\Z$-lineární aritmetiky}
\def\LnfArKLA/{jazyk $\kappa$-lineární aritmetiky}
\def\LnFArKLA/{Jazyk $\kappa$-lineární aritmetiky}
\def\LnfArKZLA/{jazyk $\kappa$-$\Z$-lineární aritmetiky}
\def\LnFArKZLA/{Jazyk $\kappa$-$\Z$-lineární aritmetiky}
\def\LnfMod/{jazyk modulů nad okruhem}
\def\LnFMod/{Jazyk modulů nad okruhem}
\def\LnfSuc/{jazyk následníka}
\def\LnFSuc/{Jazyk následníka}
\def\LnfSucO/{jazyk následníka s nulou}
\def\LnFSucO/{Jazyk následníka s nulou}
\newcommand{\Prv}{\mathbb{P}}
\def\DNF/{DNF}
\def\CNF/{CNF}
	\def\uplodadedmygeneral{}
\newcommand{\N}{\mathbb{N}}
\newcommand{\Z}{\mathbb{Z}}
\newcommand{\Primes}{\mathbb{P}}
\newcommand{\lr}[1]{\{#1\}}
\newcommand{\lara}[1]{\langle #1\rangle}
\newcommand{\set}[2]{\lr{#1;#2}}
\newcommand{\sbs}{\subseteq}
\newcommand{\vect}[1]{\overline{#1}}
\newcommand{\restr}{\upharpoonright}
\newcommand{\Restr}[2]{#1\restr #2}
\newcommand{\tuple}[1]{(#1)} 
\newcommand{\vpairg}[3]{{\newbox\horni \newbox\dolni \setbox\horni=\hbox{#1} \setbox\dolni = \hbox{#2} \newdimen\zdeposun \newdimen\wddolni \newdimen\wdhorni \setlength{\wddolni}{\wd\dolni} \setlength{\wdhorni}{\wd\horni} \setlength{\zdeposun}{-\wdhorni-\wddolni/2+\wdhorni/2} {\lower -#3 \hbox{#1}}\kern\zdeposun {\lower #3 \hbox{#2}}}} 
\newcommand{\vpair}[2]{\vpairg{#1}{#2}{.85ex}}
	\def\uplodadedmytextstructuring{}
\newcommand{\idf}[3]{\label{#1}\csname index#2\endcsname[|textbf]#3} 
\newcommand{\refer}[1]{(\ref{#1})}
\newcommand{\benum}{\begin{enumerate}}
\newcommand{\eenum}{\end{enumerate}}
\newcommand{\bitem}{\begin{itemize}}
\newcommand{\eitem}{\end{itemize}}
\newcommand{\uvz}[1]{``#1''} 
\newcommand{\axiominfo}[7]{
\begin{tabular}{#7}
Znění: & #1\\
Význam: & #2
\ifthenelse{\equal{#4}{}}{}{\\#3 & #4}%
\ifthenelse{\equal{#6}{}}{}{\\#5 & #6}%
\end{tabular}
}
\newcommand{\repeatstatement}[4][thm]
{
{

\begin{#1}[#3]
#4
\end{#1}
\addtocounter{statement}{-1}
}
}
\newcounter{thislevel}
\newcommand{\mysect}[2]{\setcounter{thislevel}{\value{mysectlevel}} \addtocounter{thislevel}{#1}
\if\arabic{thislevel}0\part{#2}\fi
\if\arabic{thislevel}1\chapter{#2}\fi
\if\arabic{thislevel}2\section{#2}\fi
\if\arabic{thislevel}3\subsection{#2}\fi
\if\arabic{thislevel}4\subsubsection{#2}\fi
\if\arabic{thislevel}5\paragraph{#2}\fi
\if\arabic{thislevel}6\subparagraph{#2}\fi}
\newcommand{\veps}[3]{\varepsilon(#1)_{#2#3}}
\newcommand{\vep}[1]{\veps{#1}{}{}}
\newcommand{\vepe}{\varepsilon}
\newcommand{\Mgood}[1]{M^{good}_{#1}(\B)}
\newcommand{\Pmat}{\Mgood{\Primes}}
\newcommand{\bprime}{P}
\newcommand{\dotminus}{\stackrel{\textstyle.}{\raisebox{0pt}[0.4\height]{$-$}}}
\newcommand{\rad}[1]{\mathrm{rad}(#1)}
\newcommand{\ISone}{\mathrm{I}\Sigma_1}
\begin{document}

\title{Fermat's Last Theorem and Catalan's Conjecture in Weak Exponential Arithmetics}
\author{\textsc{Petr~Glivick\' y}}
\address{\textsc{Petr~Glivick\' y}: University of Economics, Department of Mathematics\\ 
Ekonomick\' a 957, 148 00 Praha~4, Czech Republic} 
\address{\textsc{\hphantom{Petr~Glivick\' y}}: Academy of Sciences of the Czech Republic, Institute of Mathematics\\ 
\v Zitn\' a 25, 115 67 Praha~1, Czech Republic}
\email{petrglivicky@gmail.com}
\thanks{The research leading to these results has received funding from the European Research Council under the European Union's Seventh Framework Programme (FP7/2007-2013) / ERC grant agreement n${}^{\circ}$\ 339691. This paper was processed with contribution of long term institutional support of research activities by Faculty of Informatics and Statistics, University of Economics, Prague.}

\author{\textsc{V\' it\v ezslav~Kala}}
\address{\textsc{V\' it\v ezslav~Kala}: Charles University,
Faculty of Mathematics and Physics, Department of Algebra, Sokolovsk\' a 83,
186 00 Praha 8, Czech Republic}
\address{\textsc{\hphantom{V\' it\v ezslav~Kala}}: Max-Planck-Institut f\" ur Mathematik, Vivatsgasse 7, D-53111 Bonn, Germany}
\email{vita.kala@gmail.com}

\subjclass[2010]{03F30 (Primary), 11U10, 03H15, 03C62 (Secondary)}

\date{February 11, 2016}

\keywords{weak arithmetics, Fermat's Last Theorem, Catalan's Conjecture}

\begin{abstract}
We study Fermat's Last Theorem and Catalan's conjecture in the context of weak arithmetics with exponentiation.
We deal with expansions $\stru{\B}{e}$ of models of arithmetical theories (in the language $L=\lang{0,1,+,\cdot,\leq}$) by a binary (partial or total) function $e$ intended as an exponential. We provide a general construction of such expansions 
and prove that it is universal for the class of all exponentials $e$ which satisfy a certain natural set of axioms $Exp$.

We construct a model $\stru{\B}{e}\models \Th{\N} + Exp$ and a substructure $\stru{\A}{e}$ with $e$ total and $\A\models\PrA$ (Presburger arithmetic) 
such that in both $\stru{\B}{e}$ and $\stru{\A}{e}$ 
Fermat's Last Theorem for $e$ is violated by cofinally many exponents $n$ and (in all coordinates) cofinally many pairwise linearly independent triples $a,b,c$.

On the other hand, under the assumption of ABC conjecture (in the standard model), we show that Catalan's conjecture for $e$ is provable in $\Th{\N}+Exp$ (even in a weaker theory
) and thus holds in $\stru{\B}{e}$ and $\stru{\A}{e}$. 

Finally, we also show that Fermat's Last Theorem for $e$ is provable (again, under the assumption of ABC in $\N$) in $\Th{\N}+Exp+$\uvz{coprimality for $e$}.
\end{abstract}

\maketitle

\section{Introduction}
Wiles's proof of Fermat's Last Theorem (FLT) \cite{Wil95} has stimulated a lively discussion on how much is actually needed for the proof. 
Despite the fact that the original proof uses set-theoretical assumptions unprovable in Zermelo-Fraenkel set theory with axiom of choice (ZFC) (namely, the existence of Grothendieck universes; see \cite{McLar10} for more on this topic), it is widely believed that 
\begin{quote}
{\em\uvz{certainly much less than ZFC is used in principle, probably nothing beyond PA, and perhaps much less than that.}} \cite[p. 359]{McLar10}
\end{quote} 
McLarty showed that Grothendieck's apparatus can be formalized in finite order arithmetic (hence in ZFC) \cite{McLar11} and partially even in second order arithmetic \cite{McLar12}.
Macintyre \cite[Appendix]{Mac11} proposed and sketched a project of formalizing Wiles's proof in Peano arithmetic.

Friedman even conjectured that Fermat's Last Theorem\footnote{Friedman actually made a much stronger conjecture concerning {\em\uvz{every theorem published in the Annals of Mathematics whose statement involves only finitary mathematical objects (i.e., what logicians call an arithmetical statement)}} in place of FLT.}
is provable in the so called elementary function arithmetic (EFA) \cite{Fri99}. Here, EFA is a theory in the language $\lang{0,1,+,\cdot,\exp,\leq}$ which extends the usual quantifier free axioms for $0,1,+,\cdot,\exp,\leq$ by the scheme of bounded induction (see \cite[section 2, theory EA]{Avi03} for a possible axiomatization)\footnote{Let us note that (up to a change of language) EFA is equivalent to $\mathrm{I}\Sigma_0(exp)$ or $\mathrm{I}\Sigma_0$ + \uvz{$2^x$ is total} (see \cite[I.1.28]{HParitmeticbook} and the discussion after Proposition V.1.3 ibid.)}.

Some nice results in the direction of 
these conjectures are due to Smith, who in \cite{Smi92} proved that the theory $\mathrm{IE}_1$ of bounded existential arithmetic (a $\lang{0,1,+,\cdot,\leq}$-theory containing induction only for bounded existential quantifications of open formulas, hence even weaker then Friedman's EFA) proves Fermat's Last Theorem for some small even exponents $n$ (e.g., for $n = 4,\ 6,\ 10$). In the same paper, Smith also proves some special cases of FLT in the even weaker theory IOpen + \uvz{every two elements have a greatest common divisor}. Here, IOpen is the extension of Robinson arithmetic by induction for all quantifier-free formulas.

These results, however, can not be strengthened down to IOpen.
In a rather well-known paper \cite{She64}, Shepherdson constructed a (recursive) model of IOpen where the equation $x^3+y^3=z^3$ has a non-zero solution.
Recently, Ko\l odziejczyk in \cite{Kol11} extended Shepherdson's method to Buss's arithmetic $T^0_2$ (containing induction for sharply bounded formulas in Buss's language; see for example \cite[V.4.4]{HParitmeticbook}). In particular, he showed that $T^0_2$ does not prove Fermat's Last Theorem for $n=3$.

\medskip

The goal of this paper is to study Fermat's Last Theorem and Catalan's Conjecture in the context of weak arithmetics, including theories where exponentiation is not definable from addition and multiplication.
For this purpose we consider structures and theories in the language $L^e=\lang{0,1,+,\cdot,e,\leq}$, where the symbol $e$ is intended for a (partial or total) binary exponential.
 
In these structures, in general, no induction for multiplication or exponentiation is assumed. However, we are able to construct interesting $L^e$-structures whose $\lang{0,1,+,\cdot,\leq}$-fragments are models of Peano arithmetic or even of the complete true arithmetic $\Th{\N}$. In particular, this means that in these models we then work with two different exponential functions, the definable ``strong" one (always denoted as $x^y$) and the exponential $e$ from the language (denoted $e(x, y)$) which is usually ``weak'' (not satisfying induction). The reader should keep in mind that in such cases the role of the definable exponential $x^y$ is purely auxiliary (in the proofs) while theorems are always stated with the exponential $e$.

\medskip

We show that Fermat's Last Theorem for $e$  (i.e., the statement \uvz{$e(a,n)+e(b,n)=e(c,n)$ has no non-zero solution for $n>2$}) is not provable in the $L^e$-theory $\Th{\N}+ Exp$, where $\Th{\N}$ stands for the complete theory of the structure $\N=\stru{\N}{0,1,+,\cdot,\leq}$ and $Exp$ is a natural set of axioms for $e$ (consisting mostly of elementary identities; see Section {\ref{sect:violationFLT}}).

In more detail -- we construct a model $\stru{\B}{e}\models \Th{\N} + Exp$ and a substructure $\stru{\A}{e}$ with $e$ total and $\A\models\PrA$ (Presburger arithmetic) such that in both $\stru{\B}{e}$ and $\stru{\A}{e}$ Fermat's Last Theorem for $e$ is violated by cofinally many exponents $n$ and cofinally (in all coordinates) many pairwise linearly independent triples $a,b,c$. 
Moreover, we show that for any fixed $y$ the function $e(x,y)$ is a definable (in $\B$) function of $x$, and that $e$ is definable in the expansion $\stru{\B}{\mathcal{N}}$ of $\B$ by a predicate $\mathcal{N}(x)$ expressing \uvz{$x$ is a standard number}.
The results are summarized in Theorem \ref{thm:FLT}. 

On the other hand, under the assumption of ABC conjecture%
\footnote{Mochizuki recently announced a proof of ABC conjecture \cite{Moc12}. However, its correctness has not yet been completely verified.}
(in the standard model), we show that Catalan's conjecture for $e$ (\uvz{the only solution of $e(a,n)-e(b,m)=1$ with $a,b,m,n> 1$ is $a=m=3$, $b=n=2$}) is provable in $\Th{\N}+Exp$ (even in a weaker theory -- see Section \ref{sect:Catalan} and Theorem \ref{thm:Catalan}) and thus holds in $\stru{\B}{e}$ and $\stru{\A}{e}$. (Of course, we also have to use Catalan's conjecture in the standard model, as proved by Mih\u{a}ilescu \cite{Mih04}.)

This gives an interesting separation of the strengths of the two famous Diophantine problems.

As we note in Section \ref{sect:coprime}, a crucial property for the validity of Fermat's Last Theorem is the \uvz{coprimality} of $e$, i.e., the 
statement that if $x$ and $y$ are coprime, then so are $e(x, a)$ and $e(y, b)$. Assuming this, we can again use the ABC conjecture and show in 
Theorem \ref{thm 6.1} that Fermat's Last Theorem holds for exponentials $e$ which satisfy this coprimality condition.

\medskip

Nevertheless, we do not know whether there is a model $\stru{\B}{e}\models \Th{\N} + Exp$ (or at least $\stru{\B}{e}\models \mathrm{IOpen} + Exp$) with $e$ total, where Fermat's Last Theorem for $e$ does not hold (see Open Problem \ref{open:totality}). Note that our model $\A$ (which carries a total $e$ violating FLT) satisfies $\PrA + \mathrm{OpenTh}(\N)$, where $\mathrm{OpenTh}(\N)$ stands for the set of all open formulas true in the the standard model $\N$.

\medskip

The results above are obtained using a general construction of 
an exponential $e$ over a background model $\B$ of a sufficiently strong (at least $\ISone$) arithmetical theory (in the language $L=\lang{0,1,+,\cdot,\leq}$). We describe it in Section {\ref{sect:generalconstruction}} and prove that the construction is universal for the class of all exponentials $e$ which satisfy the set of axioms (e1)-(e7) from $Exp$ (see Proposition \ref{pro:exp}). 

\medskip

To relate the presented results with Shepherdson's, let us note that Shepherdson's model is a structure in the language $L=\lang{0,1,+,\cdot,\leq}$, which is too weak even to define any good notion of exponential.

On the other hand, in our model, its $L$-part $\B$ is a model of $\Th{\N}$ and thus \uvz{well-behaved}. The \uvz{weakness} of the model comes from the properties of the exponential $e$ (which differs dramatically from the exponential $x^y$ definable in $\B$). In particular, since both exponentials -- $x^y$ and $e$ -- coincide for all standard exponents $n$, in our model FLT for $e$ holds for all exponents $n\in\N$.

\medskip

Let us also mention that Ml\v cek \cite{Mlc76} has studied the existence of unboundedly many twin primes (i.e., pairs of primes $p$ and $p+2$) in models of weak arithmetics. He constructed three models of the theory $\PrA + \mathrm{OpenTh}(\N)$ (the same theory which is satisfied by our model $\A$) with only boundedly many primes, with unboundedly many primes but boundedly many twin primes, and with unboundedly many twin primes, respectively.
Let us note that if $\mathcal B$ is a model of Peano arithmetic, then our model $\mathcal A$ contains unboundedly many primes 
 -- see the remark immediately following the proof of Lemma \ref{pres}.
It would be interesting to consider the question of twin primes together with Fermat's Last Theorem in more detail.

\medskip

One can of course consider extending the above methods to the study of other exponential Diophantine equations. They can certainly be used
to construct solutions to various homogeneous equations (such as was $a^n+b^n=c^n$) -- the crucial thing is having an analogue of Lemma \ref{balog}.
One can also proceed similarly if the given equation can be made homogeneous by a suitable substitution for the variables.
However, note for example that if Fermat's Last Theorem for $n=3$ holds in $\mathcal B$, the equation $a^{3n}+b^{3n}=c^{3n}$ can have no solutions even with the
new exponential.

In the case of non-homogeneous equations one can sometimes expect to be able to use the ABC Conjecture, as we illustrated by the case of Catalan's Conjecture $a^n-b^m=1$. See the remarks in Section \ref{sect:coprime} concerning the importance of coprimality for the exponential. In any case, obtaining a truly general theorem seems to be a hard and interesting question for further research.

\section*{Acknowledgments}

We want to thank Professors Leonard Lipshitz and Josef Ml\v cek for their advice and suggestions which have helped improve the quality of this paper.
We also thank the anonymous referees for a careful reading of the paper and their useful comments.

\section{Preliminaries}

In this paper, $\N$ denotes the set $\N=\lr{0,1,\ldots}$ of natural numbers.

We shall denote models of theories by ``calligraphic'' letters $\mathcal M$ and their underlying sets by normal letters $M$.

\begin{constr}
{\bf Arithmetical theories}
{\rm By the language of arithmetic we mean the language $L=\lang{0,1,+,\cdot,\leq}$. 
The $L$-theory $\ISone$ is the extension of Robinson arithmetic by the scheme of induction for all $\Sigma_1$-formulas, i.e., for formulas of the form $(\exists x_0,\ldots,x_{n-1})\psi(\vect{x},\vect{y})$, where $\psi$ contains only bounded quantifiers. Let us note that the usual (G\"odel's) coding of formally finite sets is available in $\ISone$. We are going to use the coding at many places of the following text, mostly without explicitly mentioning it.

Let $\B$ be a model of $\ISone$. 
We say that a set $X\sbs B^n$ is coded in $\B$ (or, equivalently, finite in the sense of $\B$) if there is an element $s\in B$ with $\B\models$ \uvz{$s$ is a set} and for any $u\in B^n$ one has $\B\models u\in s$ if and only if $u\in X$. 
Let us note that any bounded part of a $\Sigma_1$-definable set in $\B$ is coded in $\B$. However, when dealing with sets definable in an extended language (such as $L^e$), this no longer needs to be true.

Also note that the usual exponential $x^y$ is $\Delta_1$-definable in $\ISone$. Further on, we will strictly use the notation $x^y$ for the definable exponential, while keeping different notation for other \uvz{exponentials} with which we will work.

\medskip

Presburger arithmetic $\PrA$ is the complete theory $\Th{\stru{\N}{0,1,+,\leq}}$ of the additive structure of natural numbers. It is well-known that $\PrA$ is equivalent to the theory with the following axioms: 

\benum[(Pr1)]
\item $0\neq z+1$,
\item $x\neq 0 \limp (\exists z)(x=z+1)$,
\item $x+z=y+z \limp x=y$,
\item $x+0=x$,
\item $x+(y+z)=(x+y)+z$,
\item $x+y=y+x$,
\item $x\leq y \lekv (\exists z)(x+z=y)$,
\item $(\exists y)(ny\leq x<n(y+1))$, for all $0<n\in\N$.
\eenum

(Note that (Pr8) is equivalent to the induction scheme for all formulas in the language $\lang{0,1,+,\leq}$.)

For an $L$-structure $\A$, by writing $\A\models\PrA$ we mean that all the axioms above are true in $\A$ (this is, of course, a harmless abuse of notation).}
\end{constr}

\begin{constr}
{\bf Good matrices} \label{goodmat}
{\rm
In the following sections, we will often need to work with infinite (even in the sense of $\B$) matrices of elements from our background model $\B\models \ISone$, i.e., with matrices of the form $M=(M_{ij})_{i,j\in B}$, with $M_{ij}\in B$.

Unlike the addition, multiplication of such matrices can not be generally defined. In fact, there are two obstacles in defining the product $P=MN$ of matrices $M$ and $N$:
\bitem
\item We want to have $P_{ij}=\sum_{k\in B}M_{ik}N_{kj}$.
However, this sum may add up to infinity when both the $i$-th row of $M$ and the $j$-th column of $N$ are allowed to contain unboundedly many non-zero elements.
\item Even a bounded sum $\sum_{k<b}a_k$ may not exist in $B$ if $(a_k)_{k<b}$ is not coded in $\B$.
\eitem

If in both $M,N$, each row and each column contain only boundedly many non-zero elements and all these bounded initial segments are coded in $\B$, the product $MN$ is correctly defined but it may contain a column (or row) with unboundedly many non-zero elements. In order to prevent this, we need that, in $M$, any bounded set of columns has a common upper bound for the number of rows containing non-zero elements in these columns.

\medskip 

That is why we define a matrix $M=(M_{ij})_{i,j\in B}$ to be good in $\B$ if the following hold:\\
For any $J\in B$ there is $I=I_M(J)\in B$ such that
\benum[i)]
\item all non-zero values $M_{ij}$ from the first $J$ columns are in the first $I$ rows,
\item the restricted matrix $(M_{ij})_{i<I, j<J}$ is coded in $\B$.
\eenum

Note that the above condition may be equivalently formulated as follows: $M$ is good in $\B$ if and only if for any $J\in B$ the set $\set{\tuple{i,j,M_{ij}}}{j<J,\ i\in B,\ M_{ij}\neq 0}$ is coded in $\B$.

\medskip

We will further assume that all matrices are of the form $(M_{ij})_{i,j\in B}$ (i.e. $B\times B$-matrices) and identify any $X\times Y$-matrix $(N_{ij})_{i\in X, j\in Y}$, where $X,Y\sbs B$ (not necessarily infinite), with the matrix $(M_{ij})_{i,j\in B}$, where $M_{ij}=N_{ij}$ for $(i,j)\in X\times Y$ and $M_{ij}=0$ otherwise.

We denote by $\Mgood{B}$ the set of all $B\times B$-matrices over $B$ good in $\B$.

\begin{lm}
\label{goodmultip}
$\Mgood{B}$ is closed under matrix multiplication.
\end{lm}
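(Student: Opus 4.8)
The plan is to verify directly that for $M,N\in\Mgood{B}$ the product $P=MN$, with $P_{ij}=\sum_{k\in B}M_{ik}N_{kj}$, is a well-defined $B\times B$ matrix over $B$ and is good in $\B$, the witness being $I_P(J):=I_M\bigl(I_N(J)\bigr)$.

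First I would isolate an auxiliary fact about good matrices: if $M$ is good, then for every fixed $i\in B$ and every $J\in B$ the finite sequence $(M_{ik})_{k<J}$ is coded in $\B$. Indeed, with $I=I_M(J)$, either $i<I$ and the sequence is a row of the coded submatrix $(M_{ik})_{i<I,\,k<J}$, or $i\ge I$ and then, by the first defining condition of goodness, $M_{ik}=0$ for all $k<J$, so the sequence is the (coded) zero sequence. With this in hand, well-definedness of $P$ is routine: fixing $i,j$, the first condition of goodness of $N$ (applied with parameter $j+1$) shows that $N_{kj}\ne 0$ forces $k<I_N(j+1)$, so $\sum_{k\in B}M_{ik}N_{kj}=\sum_{k<I_N(j+1)}M_{ik}N_{kj}$; both $(M_{ik})_{k<I_N(j+1)}$ and $(N_{kj})_{k<I_N(j+1)}$ are coded (the latter being a column of the coded submatrix witnessing goodness of $N$), hence their pointwise product is coded and the displayed bounded sum exists in $B$ because $\B\models\ISone$.

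Next I would check goodness of $P$. Fix $J$ and set $I'=I_M\bigl(I_N(J)\bigr)$. For the first condition: if $j<J$ and $P_{ij}\ne 0$, pick $k$ with $M_{ik}N_{kj}\ne 0$; since $j<J$ and $N_{kj}\ne 0$, goodness of $N$ gives $k<I_N(J)$, and then since $M_{ik}\ne 0$ with $k<I_N(J)$, goodness of $M$ gives $i<I_M\bigl(I_N(J)\bigr)=I'$. For the second condition: for $j<J$ one has $P_{ij}=\sum_{k<I_N(J)}M_{ik}N_{kj}$ as above; the block $(N_{kj})_{k<I_N(J),\,j<J}$ is coded by goodness of $N$ with parameter $J$, and the block $(M_{ik})_{i<I',\,k<I_N(J)}$ is exactly the coded submatrix supplied by goodness of $M$ with parameter $I_N(J)$ in place of $J$. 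Since finite matrix multiplication of coded matrices is provably total in $\ISone$, the block $(P_{ij})_{i<I',\,j<J}$ is coded, which is the required second condition; together with the first this gives $P\in\Mgood{B}$.

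I do not expect a genuine obstacle here; the one point worth highlighting is the choice $I_P(J)=I_M\bigl(I_N(J)\bigr)$, which is forced by the structure of the argument: the column bound $I_N(J)$ that goodness of $N$ provides is precisely the column parameter one must feed to goodness of $M$ so that the two coded blocks being multiplied share a matching inner range of indices $k<I_N(J)$. Everything else — bounded sums existing, pointwise products and finite matrix products of coded data being coded — is routine inside $\ISone$ and relies only on the standard fact, recalled in the preliminaries, that bounded parts of $\Sigma_1$-definable sets are coded.
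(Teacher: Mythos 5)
Your proof is correct and takes essentially the same route as the paper: the same witnessing function $I_P(J)=I_M(I_N(J))$, the same truncation of the inner sum to $k<I_N(J)$, and the same observation that the relevant block of $P$ is coded as the product of two coded blocks. You merely spell out more explicitly the well-definedness of the bounded sums, which the paper leaves implicit.
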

\begin{proof}
Let $M,N\in\Mgood{B}$ and let $I_M$, $I_N$ be some functions witnessing that $M$, $N$, respectively, are good in $\B$. Denote $P=MN$.
Take $J\in B$ and $j<J$. We have $P_{ij}=\sum_{k<I_N(J)}M_{ik}N_{kj}$. For $i>I=I_M(I_N(J))$, we get $P_{ij}=\sum_{k<I_N(J)}0\cdot N_{kj}=0$. Also clearly $(P_{ij})_{i<I, j<J}$ is coded in $\B$, since $(M_{ik})_{i<I,k<I_N(J)}$ and $(N_{kj})_{k<I_N(J), j<J}$ are.
\end{proof}
}
\end{constr}

\begin{constr}
{\bf Semirings} \label{semirings}
{\rm
By a semiring we shall mean a structure $\stru{S}{0,1,+,\cdot}$ equipped with two constants $0,1$ and two associative binary operations $+$ and $\cdot$ such that $+$ is commutative, $x + 0=0 + x=x$, $x\cdot 1=1\cdot x=x$,
$x\cdot(y+z)=x\cdot y+x\cdot z$ and $(x+y)\cdot z=x\cdot z+y\cdot z$. 
 
For semirings $S$ and $T$, a semiring homomorphism $\varphi: S\rightarrow T$ is a map
such that $\varphi(x+y)=\varphi(x)+\varphi(y)$, $\varphi(x\cdot y)=\varphi(x)\cdot\varphi(y)$, $\varphi(0)=0$ and $\varphi(1)=1$.
}
\end{constr}

\section{General construction}
\label{sect:generalconstruction}

In this Section, let $\B\models\ISone$ be a fixed \uvz{background model} and $\A$ a substructure of $\B$. 
We show a method of construction of a function $e: B\times A \rightarrow B$ such that the following properties hold in $\stru{\B}{e}$:

\begin{enumerate}[(e1)]
\item $(x=1 \lor y=0) \lekv e(x,y)=1$, 
\item $x\neq 0 \limp e(x,y)\neq 0$,
\item $e(x,1)=x$, 
\item $e(x,y+z)=e(x,y)\cdot e(x,z)$,
\item $e(\prod_{i<l} x_i,y)=\prod_{i<l} e(x_i,y)$ (right hand side is correct thanks to (e\ref{ecoding})),
\item $e(e(x,y),z) = e(x,yz)$,
\item \label{ecoding} \uvz{for any $b\in B$, the set $\set{\tuple{x,e(x,y)}}{x<b}$ is coded in $\B$},
\end{enumerate}
whenever $y,z\in A$, $x\in B$ and $(x_i)_{i<l}$ is a sequence coded in $\B$ of length $l\in B$.


For the convenience of the reader regarding our definitions, let us note that at the beginning of Section \ref{sect:violationFLT} we introduce an additional axiom (e0) and denote by $Exp$ the axioms (e0) -- (e7). $Exp^\prime$ denotes only (e0) --  (e4). At the beginning of Section \ref{sect:coprime} we introduce (e8). In Section \ref{sect:coprime} we shall also use the following weakening of (e5):

\noindent (e$5^\prime$) $e(xy, z)=e(x, z)\cdot e(y,z)$.

\bigskip

Before delving into the technical construction of the exponential, let us outline the idea. Suppose that we have an exponential $e: B\times A\rightarrow B$ 
satisfying the axioms above. How can we characterize $e$? First of all, by multiplicativity (e5), the values $e(q, y)$ at primes $q$ determine $e$. 
If we write $e(q, y)=\prod_{p\in\mathbb P} p^{\varepsilon(y)_{pq}}$ (where $\Primes$ is the set of prime numbers of $\B$), we can form a matrix
$\vep{y}=(\veps{y}{p}{q})_{p,q\in\Primes}$. Property (e\ref{ecoding}) ensures that the matrix $\vep{y}$ is good in $\B$ (see \ref{goodmat}) and (e4) and (e6) imply $\varepsilon(y+z)=\varepsilon(y)+\varepsilon(z)$ and $\varepsilon(yz)=\varepsilon(y)\varepsilon(z)$  (see the proof of Proposition \ref{pro:exp} for details). 
Thus
\begin{eqnarray}
\nonumber \varepsilon=\vepe^e: A &\rightarrow& \Pmat \\
\nonumber y & \mapsto &(\varepsilon(y)_{pq})_{p,q\in\Primes}
\end{eqnarray}
is a semiring homomorphism, where $\Pmat$ denotes the semiring of $\Primes\times\Primes$-matrices $M$ over $\B$ which are good in $\B$.
Notice also that
\begin{equation}
\varepsilon(y)_{pq} = v_p(e(q,y)) \label{def:vep}
\end{equation}
where $v_p(x)$ is the usual additive $p$-adic valuation of $x$ (in $\B$).

\medskip

Conversely, to construct an exponential $e: B\times A \rightarrow B$, we choose
a homomorphism of semirings 
\begin{eqnarray*}
\vepe: A & \rightarrow & \Pmat \\
y &\mapsto & \vep{y}=(\veps{y}{p}{q})_{p,q\in\Primes}.
\end{eqnarray*}
We denote $v:x\mapsto v(x) = (v_p(x))_{p\in \Primes}$.
The exponential $e=e^\vepe:B\times A \rightarrow B$ is then defined as follows:

\begin{eqnarray}
\nonumber e(0,0) &= & 1, \\
e(0,z) &= & 0, \label{def:e} \\
\nonumber e(x,y) &=& v^{-1}(\vep{y}v(x)),
\end{eqnarray}

for all $0\neq x\in B$, $y,z\in A$, $z\neq 0$, where $\vep{y}v(x)$ denotes the product of matrices, calculated inside $\B$. The product makes sense since both $\vep{y}$, $v(x)$ are good matrices in $\B$. Also, by Lemma \ref{goodmultip}, the vector $\vep{y}v(x)$ is good, i.e., its non-zero part is coded in $\B$. Therefore $v^{-1}(\vep{y}v(x))$ exists in $B$ (note that $v^{-1}((a_p))=\prod p^{a_p}$ for a vector $(a_p)_{p\in\mathbb P}$).

In fact, there is a bijection between these semiring homomorphisms and exponentials:

\begin{pro}
\label{pro:exp}
Let $\B\models \ISone$ and $\A\sbs\B$. Then the maps $e\mapsto \vepe^e$ and $\vepe \mapsto e^\vepe$ defined by \refer{def:vep} and \refer{def:e}, respectively, are mutual inverses and
the following are equivalent:
\bitem
\item The exponential $e=e^{\vepe}: B\times A \rightarrow B$ satisfies (e1) -- (e7).
\item The map $\vepe=\vepe^e: A  \rightarrow  \Pmat$ is a semiring homomorphism.
\eitem

Moreover:
\benum[\ \ \ \ \ \ a)]
\item The exponential $e$ is definable in $\B$ from $\vepe$ and vice versa.
\item For a fixed $y\in A$, the map $x\mapsto e(x,y)$ is definable in $\B$ from $\vep{y}$ and vice versa.
\eenum
\end{pro}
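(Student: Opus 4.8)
The plan is to verify the correspondence in two directions, then extract the ``moreover'' parts as bookkeeping. First I would check that the two maps are mutually inverse. Starting from an exponential $e$ satisfying (e1)--(e7), I set $\veps{y}{p}{q}=v_p(e(q,y))$; I must check this is a well-defined good matrix (this is exactly where (e7) is used: the set $\set{\tuple{q,e(q,y)}}{q<b}$ is coded, hence so is the set of valuations, giving goodness in the sense of \ref{goodmat}), and then that $e^{\vepe^e}=e$. For the latter, fix $0\neq x\in B$ and $y\in A$; writing $x=\prod_p p^{v_p(x)}$ and using (e5) together with (e3)/(e4) one gets $e(x,y)=\prod_q e(q,y)^{v_q(x)}$, and substituting $e(q,y)=\prod_p p^{\veps{y}{p}{q}}$ and collecting exponents of each prime $p$ shows $v_p(e(x,y))=\sum_q \veps{y}{p}{q}v_q(x)=(\vep{y}v(x))_p$, i.e.\ $e(x,y)=v^{-1}(\vep{y}v(x))$ as required; the boundary cases $x=0$ or $y=0$ are handled by (e1),(e2) and the definition \refer{def:e}. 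Conversely, starting from a semiring homomorphism $\vepe$, one checks $\vepe^{e^\vepe}=\vepe$ directly by computing $v_p(e^\vepe(q,y))$ from the matrix product applied to the unit vector $v(q)=(\delta_{pq})_p$, which picks out the $q$-th column, i.e.\ $\veps{y}{p}{q}$.

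Next I would prove the equivalence of the two bulleted conditions. For the direction ``$\vepe$ a homomorphism $\Rightarrow$ $e$ satisfies (e1)--(e7)'', each axiom is verified from the defining formula \refer{def:e}: (e1),(e2) are immediate from the boundary clauses and the fact that $v^{-1}$ of anything is nonzero; (e3) follows since $\vep{1}$ is the multiplicative identity matrix (as $\vepe(1)=1$ in $\Pmat$), so $\vep{1}v(x)=v(x)$; (e4) and (e5$'$) follow from linearity of matrix-times-vector in the vector argument together with $v(xy)=v(x)+v(y)$ and additivity $\vep{y+z}=\vep{y}+\vep{z}$; (e5) is the coded-sequence version of (e5$'$), proved by an induction in $\B$ on the length $l$ (legitimate since the statement is $\Sigma_1$-ish over coded data); (e6) is exactly associativity of matrix multiplication, $\vep{z}(\vep{y}v(x))=(\vep{z}\vep{y})v(x)=\vep{zy}v(x)$, using $\vepe(zy)=\vepe(z)\vepe(y)$; and (e7) holds because $\vep{y}$ is good and, by Lemma \ref{goodmultip}, $\vep{y}v(x)$ is good uniformly for $x<b$, so its non-zero part — and hence the values $e(x,y)$ — is coded in $\B$. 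For the converse direction, assuming (e1)--(e7) one reads off $\varepsilon(y+z)=\varepsilon(y)+\varepsilon(z)$ from (e4) via $v_p$, $\varepsilon(yz)=\varepsilon(y)\varepsilon(z)$ from (e6), and $\varepsilon(0)=0$, $\varepsilon(1)=\mathrm{Id}$ from (e1),(e3); goodness was already noted from (e7). Finally the ``moreover'' clauses a) and b) are essentially free: the formula \refer{def:e} exhibits $e$ as an explicit $\B$-definable function of the matrix $\vep{y}$ (for b), with $y$ fixed), and \refer{def:vep} exhibits $\vep{y}$ (equivalently the whole $\vepe$) as $\B$-definable from $e$; one only needs to remark that ``coded set'' and ``$v_p$'' are $\Delta_1$ in $\ISone$, so these definitions stay within the language $L$.

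The main obstacle I anticipate is not any single identity but the careful handling of coding in $\B$: making sure that all the infinite products and sums appearing in the verification of (e5) and in the passage $e\leftrightarrow\vepe$ are taken over data that is actually coded in the sense of \ref{goodmat}, so that the manipulations are legitimate inside a model of $\ISone$ rather than merely formal. Concretely, the delicate point is that $v(x)$ has bounded support (it is coded because $x\in B$ and $\ISone$ proves unique factorization into the coded sequence of prime-power exponents), and that goodness of $\vep{y}$ is precisely the hypothesis that makes $\vep{y}v(x)$ land back in $B$; tracking these finiteness conditions through the associativity computation for (e6), where one forms the product $\vep{z}\vep{y}$ of two good matrices, is the step where Lemma \ref{goodmultip} and the witnessing functions $I_M$ must be invoked with care. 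Everything else is elementary semiring algebra transported through the valuation isomorphism $B^{>0}\cong\bigoplus_{\Primes}\N$ (internal to $\B$).
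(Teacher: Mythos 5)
Your proposal is correct and follows essentially the same route as the paper: translate each axiom (e1)--(e7) into a corresponding property of the matrix map $\vepe$ via the valuation $v_p$, check the two boundary clauses and goodness from (e7), and then verify $e^{\vepe^e}=e$ and $\vepe^{e^\vepe}=\vepe$ by direct computation with $v$. One very small point: when reading off multiplicativity $\vep{yz}=\vep{y}\vep{z}$ from the axioms, you cite only (e6), but as the paper notes you also need (e5) to rewrite $e(e(q,z),y)=e(\prod_r r^{v_r(e(q,z))},y)$ as $\prod_r e(r,y)^{v_r(e(q,z))}$ before taking $v_p$; with that noted your argument agrees with the paper's.
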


\begin{pf}
It is easy to verify the following:
\bitem
\item (e1) $\IFF$ $\vep{0}=0$
\item (e2) holds by \refer{def:e}, and ensures the correctness of \refer{def:vep}
\item (e3) $\IFF$ $\vep{1}=I$
\item (e4) $\IFF$ $\vep{y+z}=\vep{y}+\vep{z}$
\item (e5) holds by \refer{def:e}, its correctness follows from (e7)
\item (e5) + (e6) $\PAK$ $\vep{y\cdot z}=\vep{y}\cdot\vep{z}$:\\
			\textit{Proof:} $\veps{y\cdot z}{p}{q}=v_p(e(q,yz))=v_p(e(e(q,z),y))=v_p(e(\prod_{r\in\Primes} r^{v_r(e(q,z))},y))=$\\
			$= v_p(\prod_{r\in\Primes}e(r,y)^{v_r(e(q,z))}) = \sum_{r\in\Primes} v_p(e(r,y))\cdot v_r(e(q,z)) = (\vep{y}\cdot \vep{z})_{pq}$.
						
\item (e6) $\KAP$ $\vep{y\cdot z}=\vep{y}\cdot\vep{z}$
\item (e7) $\IFF$ $\vep{y}$ is a good matrix for all $y$:\\
      \textit{Proof:} \uvz{$\PAK$}: Let $y\in A$, $J\in B$ be given, we find $I\in B$ such that conditions i) and ii) from the definition of a good matrix hold for $\vep{y}$.
											It is enough to take $I=1+\max\set{p\in\Primes}{v_p(e(q,y))\neq 0$ for some $J> q\in \Primes}$. This is a correct definition in $\B$	since the sequence $(e(q,y))_{q\in\Primes}$ is coded in $\B$ by (e7).\\
											\uvz{$\KAP$}: Let $y\in A$, $b\in B$ is given. Set $J=b$ and take $I\in B$ such that conditions i) and ii) from the definition of a good matrix hold for $\vep{y}$. 
											Then, in $\B$, we may define the sequence $(e(x,y))_{x<b}$ by the definition \refer{def:e} where we use $v'(x)=(v_p(x))_{p<J}$ instead of $v(x)$ and $\varepsilon'(y)=(\veps{y}{p}{q})_{p<I,q<J}$ instead of $\vep{y}$ (both $(v'(x))_{x<b}$ and $\varepsilon'(y)$ are coded in $\B$ by the assumption).
\item $e^{\vepe^{e}}=e$:\\
			\textit{Proof:} The case $x=0$ is trivial. Suppose $x\neq 0$.\\
			Then $e^{\vepe^{e}}(x,y)=v^{-1}((v_p(e(q,y)))_{pq}\cdot v(x)) = v^{-1}((\sum_{q\in\Primes} v_p(e(q,y))\cdot v_q(x))_p) = $\\
			$= \prod_{p\in\Primes} p^{\sum_{q\in\Primes} v_p(e(q,y))\cdot v_q(x)} = \prod_{q\in\Primes} (\prod_{p\in\Primes} p^{v_p(e(q,y))})^{v_q(x)}=\prod_{q\in\Primes} e(q,y)^{v_q(x)} = e(x,y)$,
			where we use (e5) in the last equality.
\item $\vepe^{e^{\vepe}}=\vepe$:\\
			\textit{Proof:} $\vepe^{e^{\vepe}}(y)_{pq}=v_p(v^{-1}(\vep{y}\cdot v(q)))=(\vep{y}\cdot v(q))_p=\veps{y}{p}{q}$.
\eitem

Now, the main statement follows immediately. The ``moreover'' part is easy.
\end{pf}

\begin{ex}
\label{ex:exp}
\benum[a)]
\item Let $\A=\B$ and $\vep{y}=y I$, for $y\in B$, where $I$ is the identity matrix. Then  $e(x,y)=x^y$ (the original exponential in $\B$).
\item Let $\A=\B$, $f$ an automorphism of $\B$ and $\vep y=f(y)I$, for $y\in B$. Then $e(x,y)=x^{f(y)}$.
\eenum
\end{ex}

In general, not much can be said about possible homomorphisms $\vepe: A\rightarrow \Pmat$ and corresponding exponentials. However, in all important examples of exponentials considered in this paper we will have $\A\models\PrA$. Then more can be said:

\begin{rmk}
\label{rem:zerosmatrices}
If $A$ is closed under subtraction (i.e. under $a-b$ with $a\geq b$), then for $y\neq y'$ the matrices $\vep{y}$, $\vep{y'}$ have different values everywhere on the diagonal. (Otherwise, let say $y'<y$. Then the matrix $\vep{y-y'-1}$ would contain a value $-1\notin B$.) In particular, for such $\A$, the homomorphism $\vepe$ is always injective.

We then get some additional nice properties for the exponential $e$, such as $y>0\limp x|e(x,y)$.
\end{rmk}

Note that we do not know of any example of homomorphism $\vepe:B \rightarrow \Pmat$ with non-diagonal matrices in its range. In Section \ref{sect:coprime} we show that only such homomorphisms can yield an exponential (total on $B$) violating FLT (see also the related Open Problem \ref{open:totality}). 
Nevertheless, the following construction provides us with a possibility to find interesting examples of \uvz{non-diagonal} homomorphisms $\vepe: A \rightarrow \Pmat$, if $\A\sbs\B$ is a suitable substructure.

\begin{constr}\label{construction}
{\bf Construction of $\vepe$}
\end{constr}

For a semiring homomorphism $\vepe: A \rightarrow \Pmat$, the values $\vep{y+n}$, for $n\in\Z$, are uniquely determined by the value $\vep{y}$. 
Therefore we may construct $\vepe$ in the following way:

For $y,z\in A$, we define $y\sim z$ if $|y-z|\in\N$.

We choose:
\begin{itemize}
\item a $\lang{0,+,\dotminus,\cdot}$-substructure $\mathcal{O}$ of $\A$ (where $a\dotminus b = a-b$ if $a\geq b$ and $0$ otherwise) such that every $\sim$-factor $[y]_{\sim}$ of $A$ contains a single element $O_y\in O$ (then $O_0 = 0$, $O_y+O_z = O_{y+z}$ and $O_y\cdot O_z = O_{O_y\cdot O_z}$).
\item a $\lang{0,+,\dotminus,\cdot}$-homomorphism 
\begin{eqnarray*}
\vepe: \mathcal{O} & \rightarrow & \Pmat \\
O_y &\mapsto & \vep{O_y}=(\veps{O_y}{p}{q})_{p,q\in\Primes},
\end{eqnarray*}
such that all elements $\veps{O_y}{p}{p}$ with $O_y\neq 0$ are nonstandard.
\end{itemize}

We sometimes call the elements $Q$ of $\mathcal{O}$ \uvz{zeroes}, as around each of them we have the component $\set{Q+z}{z\in\mathbb Z}$.

\begin{rmk}
\label{Onotalways}
It is not always possible to choose a substructure $\mathcal{O}$ as above. In fact, it is easy to see that for $\A\models\PrA$ such a substructure exists if and only if every $\sim$-factor of $A$ contains an element divisible by all $0<n\in\N$.
\end{rmk}

We may then define

\begin{equation}
\label{def:epsilon}
\vep{y} = \vep{O_y}+\delta_y I,
\end{equation}

for $y\in A$, where $I$ is the identity matrix and $\delta_y = y - O_y$.

\begin{lm}
\label{lm:vep}
Let $\vepe: A \rightarrow \Pmat$ be defined by \refer{def:epsilon}. Then it is a semiring homomorphism.
\end{lm}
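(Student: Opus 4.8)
The plan is to verify the four conditions defining a semiring homomorphism $A\to\Pmat$ — preservation of $0$ and of $1$, additivity, and multiplicativity — after first checking that $\vep{y}$ is indeed an element of $\Pmat$. For the latter: off the diagonal $\vep{y}$ coincides with the good matrix $\vep{O_y}$, while its $(p,p)$-entry is $\veps{O_y}{p}{p}+\delta_y$. When $O_y\neq 0$ the entry $\veps{O_y}{p}{p}$ is nonstandard by the choice of $\vepe$ on $\mathcal{O}$, so it absorbs the standard integer $\delta_y$ and the value stays a positive element of $B$; when $O_y=0$ then $y\sim 0$, so $y\in\N$ and $\vep{y}=yI$. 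Since $\vep{y}$ arises from the good matrix $\vep{O_y}$ by a constant shift along the prime diagonal, for each $J\in B$ its nonzero part in the first $J$ columns is still confined to boundedly many rows and is still coded in $\B$; hence $\vep{y}\in\Pmat$. I will only remark on this and concentrate on the algebraic identities.

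The two ``constant'' conditions are immediate: $O_0=0$ and $\delta_0=0$ give $\vep{0}=\vep{O_0}=0$, since $\vepe\restr\mathcal{O}$ is a $\lang{0,+,\dotminus,\cdot}$-homomorphism; and $1\sim 0$ gives $O_1=O_0=0$, $\delta_1=1$, hence $\vep{1}=\vep{0}+I=I$. For additivity I use the relation $O_{y+z}=O_y+O_z$ that is part of the choice of $\mathcal{O}$ (it holds because $O_y+O_z\in\mathcal{O}$ and $(y+z)-(O_y+O_z)=\delta_y+\delta_z$ has standard absolute value, so $O_y+O_z$ is the unique element of $\mathcal{O}$ in $[y+z]_{\sim}$). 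This forces $\delta_{y+z}=\delta_y+\delta_z$, and then $\vep{y+z}=\vep{O_y+O_z}+(\delta_y+\delta_z)I=\vep{O_y}+\vep{O_z}+\delta_yI+\delta_zI=\vep{y}+\vep{z}$, using additivity of $\vepe$ on $\mathcal{O}$.

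Multiplicativity is the heart of the proof. Expanding, $\vep{y}\,\vep{z}=(\vep{O_y}+\delta_yI)(\vep{O_z}+\delta_zI)=\vep{O_y}\vep{O_z}+\delta_z\vep{O_y}+\delta_y\vep{O_z}+\delta_y\delta_z\,I$, so it suffices to establish the two identities $O_{yz}=O_yO_z+\delta_yO_z+\delta_zO_y$ and $\delta_{yz}=\delta_y\delta_z$, and then apply the $\lang{0,+,\dotminus,\cdot}$-homomorphism $\vepe\restr\mathcal{O}$ to the first one, obtaining $\vep{O_{yz}}=\vep{O_y}\vep{O_z}+\delta_y\vep{O_z}+\delta_z\vep{O_y}$ and hence $\vep{yz}=\vep{O_{yz}}+\delta_{yz}I=\vep{y}\,\vep{z}$. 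To pin down $O_{yz}$, note $yz=O_yO_z+\delta_yO_z+\delta_zO_y+\delta_y\delta_z$, so $w:=O_yO_z+\delta_yO_z+\delta_zO_y$ satisfies $|yz-w|=|\delta_y\delta_z|\in\N$, i.e.\ $w\sim yz$; moreover $w\ge 0$ (clear if all the $\delta$'s are nonnegative, and otherwise $yz$ is nonstandard while $yz-w$ is standard), and a short split on the signs of $\delta_y,\delta_z$ shows $w\in\mathcal{O}$ using closure of $\mathcal{O}$ under $+$, $\cdot$ and truncated subtraction $\dotminus$. Since $\mathcal{O}$ meets $[yz]_{\sim}$ in a single point, $w=O_{yz}$ and therefore $\delta_{yz}=\delta_y\delta_z$.

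The main obstacle is precisely this bookkeeping: identifying $O_{yz}$ and $\delta_{yz}$ correctly while the corrections $\delta_y,\delta_z$ run over all standard integers, possibly negative. One has to be sure that the relevant combinations of $O_y,O_z$ with the $\delta$'s remain elements of $B$ and of $\mathcal{O}$ — which is exactly what the closure of $\mathcal{O}$ under $\dotminus$ and the nonstandardness of the diagonal entries $\veps{O_y}{p}{p}$ are there for. The remaining manipulations are routine semiring identities in $\Pmat$.
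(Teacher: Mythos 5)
Your proof is correct and takes essentially the same route as the paper's: check $\vep{0}=0$, $\vep{1}=I$, additivity via $O_{y+z}=O_y+O_z$, and multiplicativity by expanding $(\vep{O_y}+\delta_y I)(\vep{O_z}+\delta_z I)$ and matching it with $\vep{O_{yz}}+\delta_{yz}I$ through the homomorphism property of $\vepe\restr\mathcal{O}$. You are merely more explicit than the paper about pinning down $O_{yz}=O_yO_z+\delta_yO_z+\delta_zO_y$, $\delta_{yz}=\delta_y\delta_z$, and about the sign bookkeeping for negative $\delta$'s (and the membership $\vep y\in\Pmat$), which the paper dispatches with a parenthetical remark that it ``harmlessly abused the notation.''
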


\begin{pf}
Clearly, $\vep{0}=0$ and $\vep{1}=I$.

It is $\vep{y+z}= \vep{O_{y+z}}+\delta_{y+z} I = \vep{O_y + O_z} + (\delta_y + \delta_z) I = (\vep{O_y} + \delta_y I) + (\vep{O_z} + \delta_z I) =  \vep{y}+\vep{z}$.

Finally, $\vep{yz} = \vep{(O_y+\delta_y)(O_z+\delta_z)} = \vep{O_y O_z} + \vep{\delta_y O_z} + \vep{\delta_z O_y} + \vep{\delta_y \delta_z} = \vep{O_y}\cdot \vep{O_z} + \delta_y \vep{O_z} + \delta_z \vep{O_y} + \delta_y\delta_z I = (\vep{O_y}+\delta_y I)\cdot(\vep{O_z} + \delta_z I) = \vep{y}\cdot\vep{z}$. (For the sake of clarity, we harmlessly abused the notation a bit by writing $\vep{\delta_y O_y}$ even for $\delta_y<0$.)
\end{pf}

\begin{rmk}
The construction may be further generalized by changing the definition of the equivalence $\sim$. We may define $y\sim z$ if $|y-z|\in D$ where $\mathcal D$ is an initial segment of $\B$ and a substructure of $\A$.
Then the notion of homomorphism has to be modified to \uvz{preserve $\mathcal D$}.
\end{rmk}

\section{Violation of FLT}
\label{sect:violationFLT}

We show that Fermat's Last Theorem for $e$:
%
%
\uvz{$e(a,n)+e(b,n)=e(c,n)$ has no non-zero solution for $n>2$},
%
%
is not provable in the $L^e$-theory $\Th{\N}+ Exp$, where $L^e=\lang{0,1,+,\cdot,e,\leq}$ and $Exp$ consists of the following axioms:

\medskip

\begin{tabular}{ll}
(e0) & \uvz{$e:B\times A \rightarrow B$ for some substructure $\A$ of $\B$ with $\A\models\PrA$},\\
\end{tabular}\\
\indent\ \ axioms (e1) -- (e7) from Section \ref{sect:generalconstruction}.

\ 

(Here (e0) is an axiom schema with infinitely many instances expressing validity of the schema (Pr8) in $\A$.)
\medskip

More precisely: For any nonstandard $\B\models \Th{\N}$
, we construct an exponential $e: B\times A \rightarrow B$ with $\stru{\B}{e}\models Exp$ such that there is an unbounded (in $\B$) set $E\sbs A$ of exponents and (in every coordinate) unbounded set $T\sbs A^3$ of pairwise linearly independent triples such that for every $n\in E$ and $\tuple{a,b,c}\in T$ it is
$$
e(a,n)+e(b,n)=e(c,n).
$$

Moreover, we ensure that $A$ is closed under $e$. 
Hence $\stru{\A}{e}\models \PrA$ $+$ 
\uvz{all open formulas true in $\stru{\B}{e}$} $+$ \uvz{$e$ is total}, and Fermat's Last Theorem for $e$ is 
violated in $\stru{\A}{e}$ by cofinally many exponents $n$ and pairwise linearly independent triples of $a,b,c$.

\medskip

To first outline the idea, take $\mathcal B\models \ISone$. 
To specify the substructure $\mathcal A$, we just need to choose a 
set of \uvz{zeroes} $O$ as in Section \ref{construction}. Our zeroes will be a suitable subset of $\{Q; n|Q$ for all $0<n\in\mathbb N\}$ 
(see Section \ref{sect:FLTviolconstruction})
and $A$ will then consist of elements of the form $Q+z$ for some $Q\in O$ and $z\in\mathbb Z$. Such an $\mathcal A$ will then be a model of
Presburger arithmetic (Lemma \ref{pres}).

We then define the matrices $\varepsilon(Q)$ for $Q\in O$ in such a way that $e(2, Q)=e(3, Q)=e(5, Q)$ for $Q\in O$.
Then we get $e(2, Q+1)+e(3, Q+1)=2e(2, Q)+3e(3, Q)=5e(5, Q)=e(5, Q+1)$, and so $(2,3,5)$ is a counterexample to Fermat's Last Theorem.

This works for any model $\mathcal B$ of $\ISone$ with essentially the same proofs as in the rest of this Section. To obtain the result for
an unbounded set of triples $(a, b, c)$, we shall assume that $\mathcal B$ is a model of $\Th{\N}$, so that we can use the following
number-theoretic result, due to Balog \cite{Bal92}:

\begin{lm}
\label{balog}
For each $K\in\mathbb N$, the equation $3p+5q=2r$ has a solution in primes $p, q, r\in\mathbb N$ such that $p, q, r\geq K$.
\end{lm}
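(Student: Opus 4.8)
The statement asserts that $3p + 5q = 2r$ has solutions in primes $p, q, r$ all exceeding any given bound $K$. I would attack this via the Hardy--Littlewood circle method, following Balog \cite{Bal92}. The plan is to count, with weights, the number of prime triples $(p, q, r)$ with $p, q, r \le N$ satisfying $3p + 5q - 2r = 0$, and show this count grows with $N$; then the constraint $p, q, r \ge K$ follows since the contribution of triples with a small coordinate is negligible. Concretely, set $f_a(\alpha) = \sum_{K \le p \le N} (\log p)\, e(a p \alpha)$ for $a \in \{3, 5, -2\}$ (where $e(t) = e^{2\pi i t}$), and study
\[
S(N) = \int_0^1 f_3(\alpha) f_5(\alpha) f_{-2}(\alpha)\, d\alpha = \sum_{\substack{K \le p, q, r \le N \\ 3p + 5q = 2r}} (\log p)(\log q)(\log r).
\]
It suffices to prove $S(N) \to \infty$.

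The execution splits into the major and minor arcs. On the major arcs $\mathfrak{M}$ — a union of small intervals around rationals $b/c$ with $c$ bounded by a power of $\log N$ — one uses the Siegel--Walfisz theorem to approximate each $f_a(\alpha)$ by its expected main term, obtaining a contribution of order $\mathfrak{S} \cdot N^2 / \log N$ (up to constants), where $\mathfrak{S}$ is a singular series. The crucial point here is that $\mathfrak{S} > 0$: this is a local solvability check, requiring that $3p + 5q = 2r$ have solutions modulo every prime power, which holds because the coefficients $3, 5, -2$ are pairwise coprime and sum to $6 \neq 0$ in a way that avoids any local obstruction (e.g., mod $2$ one needs $p + q$ even, achievable with both $p, q$ odd). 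On the minor arcs $\mathfrak{m} = [0,1] \setminus \mathfrak{M}$, one invokes Vinogradov's estimate for exponential sums over primes: $\sup_{\alpha \in \mathfrak{m}} |f_a(\alpha)| \ll N (\log N)^{-A}$ for any $A$, and then bounds $\int_{\mathfrak{m}} |f_3 f_5 f_{-2}| \le \big(\sup_{\mathfrak{m}} |f_3|\big) \int_0^1 |f_5 f_{-2}|\, d\alpha \ll N(\log N)^{-A} \cdot N$ by Cauchy--Schwarz and the prime number theorem, which is $o(N^2 / \log N)$ for $A$ large. Combining, $S(N) \sim c\, N^2 / \log N$ for some $c > 0$, so in particular $S(N) \to \infty$, giving infinitely many solutions and hence solutions with all of $p, q, r \ge K$.

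The main obstacle is the minor-arc estimate, which relies on Vinogradov's bilinear-sum method (or Vaughan's identity) for sums of $e(a p \alpha)$ over primes — this is the technically heaviest ingredient and the reason the result is genuinely a theorem of analytic number theory rather than an elementary observation. A secondary subtlety is verifying positivity of the singular series $\mathfrak{S}$, but since this is a ternary linear equation with pairwise coprime nonzero coefficients (not all of the same sign, so the real-place and archimedean density conditions are satisfied), no local obstruction arises; one checks this prime-by-prime. I would simply cite \cite{Bal92} for the full quantitative statement, as reproducing the circle-method details is not needed for the present paper — all that is used downstream is the bare existence of arbitrarily large prime solutions.
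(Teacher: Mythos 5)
Your proposal is correct in outline but takes a heavier route than the paper, and (in its fall-back ``just cite Balog'' form) leaves the crucial elementary step implicit. The paper applies the main theorem of \cite{Bal92} strictly as a black box: after checking that the coefficient vector $(3,5,-2)$ is admissible and satisfies Balog's local conditions (C1)/(C2) by exhibiting explicit witnesses such as $3\cdot 1 + 5\cdot 1 - 2\cdot 4 = 0$ and $3\cdot(-1)+5\cdot 1-2\cdot 1=0$, one learns that the number of prime solutions with $p,q,r<X$ is $\geq X^2/(\log X)^3$ for large $X$. The deduction of ``all coordinates $\geq K$'' is then a two-line pigeonhole: if every solution below $X$ had one coordinate $<K$, then, since fixing two coordinates determines the third, there would be at most $3KX$ solutions, which is eventually smaller than $X^2/(\log X)^3$. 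You instead build the lower bound $K$ into the exponential sums and sketch the circle method itself — this is valid, but it essentially reproves Balog's theorem, which is far more machinery than the paper needs. Your sketch also has a minor slip in the minor-arc bound: by Cauchy--Schwarz and Parseval, $\int_0^1 |f_5 f_{-2}|\,d\alpha \ll N\log N$ (not $\ll N$), though this is harmless since the Vinogradov saving of $(\log N)^{-A}$ can absorb the extra logarithm. Finally, your discussion of the singular series is a plausibility heuristic (``no local obstruction arises''); the paper is more careful in actually verifying Balog's stated hypotheses (C1), (C2) with concrete integer solutions — if you invoke \cite{Bal92} you do need to check those explicit conditions rather than gesture at positivity. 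In short: same ultimate source, but the paper's deduction from it is elementary and short, whereas your primary plan re-enters the analytic machinery unnecessarily, and your ``cite and conclude'' version should make the counting argument explicit.
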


\begin{pf}
The lemma follows by an application of the main theorem of \cite[p. 369]{Bal92}. The matrix $(3, 5, -2)$ is admissible in Balog's sense and satisfies the local solvability conditions (for (C1) choose $3\cdot 1+5\cdot 1-2\cdot 4=0$, for (C2) we can choose $3\cdot (-1)+5\cdot 1-2\cdot 1=0$, which works for any prime power).
Thus the theorem applies in this situation and we know that for sufficiently large $X$, the number of prime solutions with $p, q, r<X$ is at least $\frac{X^2}{(log X)^3}$.

Now fix $K$. We can assume for contradiction that in each prime solution of $3p+5q-2r=0$, at least one of the variables is $<K$. Choose $X$ sufficiently large and let's count the solutions with $p, q, r<X$.
For solutions with $p<K$ we have at most $K$ possibilities for $p$ and $X$ possibilities for $q$. $r$ is then uniquely determined, and so there are at most $KX$ of these solutions. Similarly we have at most $KX$ solutions with $q<K$ and with $r<K$. Hence there are at most $3KX<\frac{X^2}{(log X)^3}$ solutions of the equation, which is a contradiction. 
\end{pf}

\begin{constr}
\label{sect:FLTviolconstruction}
{\bf Construction}
\end{constr}

Let $\B\models \Th{\N}$ be nonstandard. Fix a nonstandard number $\Delta$ from $\B$ and denote by $\bprime$ the $2^\Delta$-th prime of $\B$. 

By Lemma \ref{balog} there is an unbounded (in every coordinate) set $S\sbs B^3$ of pairwise disjoint triples of primes $p,q,r$ such that $3p+5q=2r$
and $p, q, r>P$. 


We may assume that $S$ is definable in $\B$ (e.g. we take the lexicographic order of $B^3$ and define $S$ recursively by 
adding, in each step, the least solution $p,q,r$ disjoint with all previously added.)

We define $\A$ to be the substructure of $\B$ with the universe $A=\set{Q+z}{z\in\Z$ and $n|Q$, $2^{n\Delta}|Q$ for all $0<n\in\N}$ 
(i.e., $A$ is the union of $\sim$-factors of $\B$ which contain an element divisible by all $n>0$ and $2^{n\Delta}$ with $0<n\in\N$).

It is fairly straightforward to check that $\A$ is a model of Presburger arithmetic.

\begin{lm}\label{pres}
$\A\models \PrA$.
\end{lm}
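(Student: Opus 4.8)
The plan is to verify the Presburger axioms (Pr1)--(Pr8) for $\A$ directly, using the explicit description of $A=\set{Q+z}{z\in\Z$ and $n\mid Q$, $2^{n\Delta}\mid Q$ for all $0<n\in\N}$. First I would observe that $A$ is closed under $+$ and contains $0$ and $1$: if $a=Q+z$ and $a'=Q'+z'$ with $Q,Q'$ divisible by all the required quantities, then $a+a'=(Q+Q')+(z+z')$ and $Q+Q'$ is again divisible by every $n>0$ and every $2^{n\Delta}$, so $a+a'\in A$; also $0=0+0\in A$ and $1=0+1\in A$. Hence $\A$ is a genuine $\lang{0,1,+,\leq}$-substructure of $\B$ (with $\leq$ inherited), and this immediately hands us (Pr1), (Pr3), (Pr4), (Pr5), (Pr6) and (Pr7), since these are universal (or universal-existential witnessed inside $\A$ via the fact that $a\le a'$ in $\B$ gives $a'-a\in A$ by the same closure argument) statements true in $\B\models\Th{\N}$ and therefore true in any substructure closed under the operations --- one only has to note for (Pr7) that the witness $z=a'-a$ indeed lies in $A$.

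The two axioms requiring real work are (Pr2) and the schema (Pr8). For (Pr2), given $a=Q+z\in A$ with $a\neq 0$: if $z>0$ in $\Z$ then $a-1=Q+(z-1)\in A$ works; if $z\le 0$, write $a=Q'+z'$ where $Q'$ is the canonical zero in the $\sim$-class (divisible by everything) and $z'\in\Z$ --- the point is that $a\neq 0$ forces $a\ge 1$ in $\B$ (as $\N$-style predecessors behave correctly in $\B\models\Th{\N}$), and then $a-1\in\B$ still lies in the same $\sim$-class, hence in $A$. For (Pr8), fix $0<n\in\N$; I must find $y\in A$ with $ny\le a<n(y+1)$. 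Working in $\B$, there is a unique such $y_0\in B$ (by division with remainder in $\Th{\N}$). I claim $y_0\in A$: writing $a=Q+z$ with $Q$ divisible by all $m>0$ and all $2^{m\Delta}$, division of $Q$ by $n$ is exact, $Q=n\cdot(Q/n)$, and $Q/n$ is again divisible by every $m>0$ and every $2^{m\Delta}$ (since $Q$ was divisible by $nm$ and by $n\cdot 2^{m\Delta}$); so $y_0$ differs from $Q/n$ by a standard integer amount, placing $y_0$ in the same $\sim$-class as the all-divisible element $Q/n$, hence $y_0\in A$. This is the crux, and the reason the definition of $A$ was set up with "divisible by all $0<n\in\N$" built in.

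The main obstacle I anticipate is bookkeeping the boundary cases in (Pr2) and (Pr8) where the representative $Q+z$ has $z$ negative or where $a$ is itself a "zero" $Q$: one must be careful that subtracting $1$, or dividing, stays inside the prescribed union of $\sim$-classes and does not accidentally leave $A$ by producing an element whose all-divisible representative fails the $2^{n\Delta}$ condition. Since the divisibility conditions are closed under taking $n$-th parts (for standard $n$), this goes through, but it is exactly the step that uses the specific shape of the zero set $O$ chosen in Construction \ref{sect:FLTviolconstruction}, and it is where one invokes Remark \ref{Onotalways}: every $\sim$-factor of $A$ contains an element divisible by all $0<n\in\N$, which is what makes the later choice of $\mathcal O$ and the homomorphism $\vepe$ possible. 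Everything else reduces to transferring universal consequences of $\Th{\N}$ to the substructure $\A$.
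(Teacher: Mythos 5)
Your proposal is correct and follows essentially the same route as the paper: the easy axioms are checked directly, and the crux is (Pr8), which you handle exactly as the paper does --- writing $x=Q+z$ with $Q$ divisible by $n$, observing that $Q/n$ is again divisible by all $m$ and $2^{m\Delta}$ (hence a valid ``zero''), and then shifting by a standard integer via division of $z$ by $n$. The paper's proof is just a more compact version of your argument, dispatching (Pr1)--(Pr7) in one line and spelling out only (Pr8).
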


\begin{pf}
One directly checks the axioms (Pr1) -- (Pr8). The only not entirely trivial one is (Pr8):

Take $x\in A$ and $0<n\in\mathbb N$. Then by the construction we have $x=Q+z$ with $n|Q$ and $z\in\mathbb Z$. Thus $Q=na$ and it's easy to see
that $a\in A$ as well. If we now write $z=nb+c$, $0\leq c<n$,
we get $x=n(a+b)+c$ as needed. 
(It is exactly for this argument to work that we require each \uvz{zero} $Q$ to be divisible by all $0<n\in\mathbb N$.)
\end{pf}

Let us also note that $\mathcal A$ contains unboundedly many primes: Fix an element $Q$ such that $n|Q$ and $2^{n\Delta}|Q$ for all $0<n\in\N$. By Dirichlet's Theorem on primes in arithmetic progressions (which holds not only in $\Th{\N}$, but even for any model of PA thanks to an elementary proof by Selberg), $\mathcal B$ contains unboundedly many primes of the form $aQ+1$. Each of these primes in fact lies in $\mathcal A$ by definition. Note that if we assume Dickson's conjecture (which is of course quite strong and far from being proved), $\A$ contains an unbounded set of twin primes of the form $aQ-1$, $aQ+1$.

However, the model $\mathcal A$ is very weak in terms of induction -- it is not even a model of IOpen. Indeed, let $Q$ be again an element such that $n|Q$ and $2^{n\Delta}|Q$ for all $0<n\in\N$, and let $b\in B$, $b\notin A$. Then $Q,Qb\in A$, but the induction axiom for the open formula $Qx\leq Qb$ does not hold in $\A$. (See also Open Problem \ref{open:totality}.)

\

Further, unless stated otherwise, we work in $\B$.

Let us now construct the homomorphism $\vepe$ as in Section \ref{construction}.
For $y\in A$, we set $O_y$ to be the unique element in $[y]_{\sim}$ divisible by all $n$ and $2^{n\Delta}$ with $0<n\in\N$. Clearly, the set $O=\set{O_y}{y\in A}$ is a $\lang{0,+,\dotminus,\cdot}$-substructure of $\A$. (Moreover, $O$ is closed under multiplication by any element $b\in B$.)

For $Q\in O$ we define $\vep{Q} = (\veps{Q}{p}{q})$ as


\begin{tabular}{rcll}
$\veps{Q}{p}{q}$ & $=$ & $Q/2^\Delta$ & for $p,q\leq \bprime$, \\
$\veps{Q}{p}{q}$ & $=$ & $Q/3$ & if $p,q$ are members of the same triple $s\in S$ \\
& & & (allowing $p=q$ lying in some triple in $S$), \\
$\veps{Q}{p}{q}$ & $=$ & $Q$ & for $p=q > \bprime$ and $p$ in no triple $s\in S$, \\
$\veps{Q}{p}{q}$ & $=$ & $0$ & otherwise.
\end{tabular}

\medskip

\begin{lm}
$\vepe : Q \mapsto \vep{Q}$ is a $\lang{0,+,\dotminus,\cdot}$-homomorphism (even an embedding) from $\mathcal{O}$ to $\Pmat$.
\end{lm}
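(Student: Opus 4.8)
\begin{pf}
All of this takes place inside the background model $\B$, and the plan is: first check that $\vep{Q}\in\Pmat$ for every $Q\in\mathcal O$ and that the defining clauses do not overlap; then dispatch the additive conditions by one structural observation; then treat multiplication, where the real work lies; and finally read off injectivity. For membership in $\Pmat$: every $Q\in\mathcal O$ is divisible by $2^\Delta$ and by $3$ (indeed by all $0<n\in\N$), so all entries of $\vep Q$ genuinely exist in $\B$, and the three nontrivial clauses are mutually exclusive since the first forces $p,q\le\bprime$ whereas the other two force $p>\bprime$, while the second places $p$ in a triple of $S$ and the third forbids it. To see goodness, fix $J\in B$: a column $q<J$ has nonzero entries only in rows $\le\bprime$, or in the at most three members of the unique triple of $S$ through $q$, or in the single row $p=q$; since $S$ is definable in $\B$, the triples of $S$ meeting $[0,J)$ form a coded bounded set, so a suitable witness $I_{\vep Q}(J)$ and the corresponding coded block exist. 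By Lemma~\ref{goodmultip} every matrix product below is then legitimate.

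The additive conditions $\vep 0=0$, $\vep{Q+Q'}=\vep Q+\vep{Q'}$ and $\vep{Q\dotminus Q'}=\vep Q\dotminus\vep{Q'}$ all follow from a single remark: for each fixed $p,q$ one has $\veps Q{p}{q}=Q/d_{pq}$ for a constant $d_{pq}\in\{2^\Delta,3,1\}$ that depends only on $p,q$, or else $\veps Q{p}{q}$ is identically $0$. Since such a $d_{pq}$ divides every element of $\mathcal O$ and $x\mapsto x/d_{pq}$ is order-preserving on its multiples, the identities hold entry by entry.

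Multiplicativity, $\vep{QQ'}=\vep Q\cdot\vep{Q'}$, is the step I expect to be the main obstacle. One first notes $QQ'\in\mathcal O$, so $\vep{QQ'}$ is given by the same formula with $QQ'$ in place of $Q$; then one computes $(\vep Q\vep{Q'})_{pq}=\sum_{k\in\Primes}\veps Q{p}{k}\veps{Q'}{k}{q}$ by cases. For $p,q\le\bprime$ the sum runs over the $2^\Delta$ primes $k\le\bprime$ (here one uses that $\bprime$ is the $2^\Delta$-th prime) and equals $2^\Delta\cdot(Q/2^\Delta)(Q'/2^\Delta)=QQ'/2^\Delta$; for $p,q$ in a common triple of $S$ it runs over the three distinct primes of that triple and equals $3\cdot(Q/3)(Q'/3)=QQ'/3$; for $p=q>\bprime$ lying in no triple only the term $k=p$ survives and equals $QQ'$; and in every remaining case the pairwise disjointness of the triples of $S$ together with the fact that $\bprime$ separates the ``small'' block from the triple block forces every summand to vanish. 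In each case the value matches $\veps{QQ'}{p}{q}$. The delicate points are precisely this last vanishing analysis and verifying that each division that occurs ($Q/2^\Delta$, $QQ'/2^{2\Delta}$, $QQ'/9$, and so on) is legitimate in $\B$ — which is exactly why each zero $Q$ was required to be divisible by all $0<n\in\N$ and all $2^{n\Delta}$. Injectivity is then immediate: $\vep Q=\vep{Q'}$ forces $Q/2^\Delta=Q'/2^\Delta$ by inspecting a diagonal entry with $p=q\le\bprime$, hence $Q=Q'$, so $\vepe$ is an embedding.
\end{pf}
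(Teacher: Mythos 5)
Your proof is correct and follows essentially the same route as the paper's: check membership in $\Pmat$ via the definability of $S$ and coding in $\Th{\N}$, and verify the homomorphism identities by a case analysis on the entries of the matrix product. You simply carry out the remaining cases that the paper dismisses as ``similar'' (the paper only writes out the $(p,q)\in S$-block multiplicativity check), and you add the one-line injectivity argument that the paper states but does not prove.
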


\begin{pf}
To check that $\vepe$ is a homomorphism is easy. All the computations are similar, so as an example, let us just check that the matrices 
$\vep {QR}$ and $\vep {Q}\vep {R}$ have 
the same entries at $(p, q)$ with $(p, q, r)\in S$ for some $r$. We have
$(\vep {Q}\vep {R})_{pq}=\sum_j \veps Qpj\veps Rjq$. Since $\veps Qpj\neq 0$ only for $j=p, q,$ or $r$, we have 
$(\vep {Q}\vep {R})_{pq}=\veps Qpp\veps Rpq + \veps Qpq\veps Rqq + \veps Qpr\veps Rrq=3\cdot (Q/3)\cdot (R/3)=QR/3=\veps {QR}pq$.

We also need to check that $\vep{Q}\in\Pmat$ for every $Q$. Thanks to the definability of the set $S$ in $\B$, $\vep{Q}$ is even definable in $\B$ 
(and obviously there is a definable function $f(q)$ such that all non-zero elements of the $q$-th column are in the rows $p$ with $p\leq f(q)$). 
This is clearly enough since $\B\models\Th{\N}$ codes all finite parts of definable sets.
\end{pf}

By Lemma \ref{lm:vep} we get a semiring homomorphism $\vepe:A\rightarrow\Pmat$ and by definition \refer{def:e} and Proposition \ref{pro:exp} we obtain an exponential $e:B\times A \rightarrow B$ which satisfies the axioms $Exp$.

Let us note that for fixed $y$ the exponential $e(x,y)$ is a definable function of $x$ in $\B$ (this follows from Proposition \ref{pro:exp} and from the definability of $S$).
Moreover, using the new predicate $\mathcal{N}(x)$ expressing \uvz{$x$ is a standard number}, both the set $A$ and the function $y\mapsto O_y$ are definable. Hence, by Proposition \ref{pro:exp} again, $e$ is definable in $\stru{\B}{\mathcal{N}}$.


Now we can show that $e$ is a total exponential on $A$, i.e.,

\begin{lm}
$\Restr{e}{A\times A}:A\times A \rightarrow A$.
\end{lm}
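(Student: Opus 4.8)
The goal is to show that $e$ maps $A \times A$ into $A$, where $A$ consists exactly of the elements $Q + z$ with $Q \in O$ (divisible by every $n > 0$ and every $2^{n\Delta}$) and $z \in \mathbb{Z}$. The plan is first to reduce to computing $e(x, Q)$ for $x \in A$ and a single ``zero'' $Q \in O$, since $e(x, y) = e(x, O_y + \delta_y) = e(x, O_y) \cdot e(x, \delta_y) = e(x, O_y) \cdot x^{\delta_y}$ by (e4) and (e3)/(e0), and multiplying by $x^{\delta_y}$ (a power of an element of $A$ by an integer exponent) keeps us inside $A$; moreover $\delta_y$ ranges over $\mathbb{Z}$, so it is really the $e(x, O_y)$ factor that must land in $A$. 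So it suffices to show: for $x \in A$ and $Q \in O$, the element $e(x, Q)$ lies in $A$, i.e.\ $e(x,Q)$ equals (a zero) plus an integer.

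Next I would unwind the definition $e(x, Q) = v^{-1}(\varepsilon(Q) v(x))$ using (e5) to reduce to the prime factors of $x$: $e(x, Q) = \prod_{q \in \Primes} e(q, Q)^{v_q(x)}$, and $v_q(x)$ is nonzero for only boundedly many $q$, all coded. Now I use the explicit description of $\varepsilon(Q)$ from the construction. From the four-case definition of $\veps{Q}{p}{q}$ one reads off $e(q, Q) = \prod_p p^{\veps{Q}{p}{q}}$: for $q \le P$ it is $\bigl(\prod_{p \le P} p\bigr)^{Q/2^\Delta}$; for $q$ in a triple $s \in S$ it is $\bigl(\prod_{p \in s} p\bigr)^{Q/3}$; for $q = p > P$ not in any triple it is $q^Q$; and in the remaining cases the relevant column is zero. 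The point is that in each case the exponents appearing are $Q/2^\Delta$, $Q/3$, or $Q$, each of which is again divisible by all $n > 0$ and all $2^{n\Delta}$ (because $Q$ is, and $2^\Delta, 3$ are ``absorbed'' — here one uses that $Q$ is divisible by $3 \cdot n$ and by $2^\Delta \cdot 2^{n\Delta} = 2^{(n+1)\Delta}$ for every $n$), hence each of these exponents is itself a ``zero'' in $O$, or more precisely lies in $O$.

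Putting this together: $e(x, Q)$ is a finite product (over the boundedly many primes $q$ with $v_q(x) \ne 0$, together with the structure of $A \ni x$) of terms each of the form (base raised to an element of $O$). An element of $O$ times an integer $v_q(x)$ is still in $O$ (since $O$ is closed under multiplication by any $b \in B$, as noted after Lemma \ref{pres}), and $O$ is closed under addition; and a product of finitely many primes raised to elements of $O$ has total ``$v_p$-exponent vector'' with every coordinate in $O$, so the product is of the form $\prod_p p^{m_p}$ with $m_p \in O \cup \{0\}$, hence — being, in particular, divisible by every $n>0$ — an element of $O$ itself, so certainly in $A$. More carefully one should track that $x$ itself is $Q_x + z$, write $v_q(x)$ accordingly, and check that the exponent of each prime in $e(x,Q)$ decomposes as (element of $O$) $+$ (integer), which is exactly membership in $A$. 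I would organize this as: (i) reduce to $e(x, O_y)$; (ii) reduce to prime bases via (e5); (iii) read off $e(q, Q)$ from the table and observe every exponent is in $O$; (iv) assemble, using closure of $O$ under addition and under multiplication by elements of $B$, to conclude the exponent vector of $e(x, Q)$ has all coordinates in $O \cup \{0\}$, hence $e(x,Q) \in O \subseteq A$, and finally $e(x,y) = e(x,O_y) \cdot x^{\delta_y} \in A$.

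The main obstacle is bookkeeping rather than conceptual: making precise that ``every exponent occurring is in $O$'' survives the matrix product $\varepsilon(Q) v(x)$ — i.e.\ that summing boundedly many products $\veps{Q}{p}{q} v_q(x)$ stays in $O \cup\{0\}$ — and handling the interaction with the integer shift $z$ in $x = Q_x + z$ cleanly (the shift contributes only an integer to each prime-exponent, which is harmless). One must also make sure the relevant bounded initial segments are coded in $\B$ so that the products and sums literally exist in $B$; this is immediate from $\vep{Q} \in \Pmat$ and $\B \models \Th{\N}$, already established.
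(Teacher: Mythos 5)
Your reductions (i) $e(x,y)=e(x,Q)\cdot x^{\delta_y}$ and (ii) passing to prime bases via (e5) match the paper, and it is true that every entry of the vector $\varepsilon(Q)\,v(x)$ lies in $O\cup\{0\}$, since $O$ is closed under addition and under multiplication by arbitrary $b\in B$. The gap is in step (iv): from ``the exponent vector of $e(x,Q)$ has all coordinates in $O\cup\{0\}$'' you conclude ``$e(x,Q)$ is divisible by every $n>0$, hence $e(x,Q)\in O$.'' This inference is false. Membership in $O$ is a condition on the \emph{value}, not on its exponent vector: a number $\prod_p p^{m_p}$ with every $m_p$ divisible by all standard $n$ need not itself be divisible by any small prime at all. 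Concretely, if $x\in A$ has no prime factor $\le P$ (such $x$ exist in $A$, e.g.\ primes of the form $aQ+1$), then by the shape of $\varepsilon(Q)$ the value $e(x,Q)$ is a product of primes all $>P$; it is therefore odd, so not divisible by $2$, so not in $O$, even though each exponent $m_p$ lies in $O$.

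The paper's proof handles exactly this by a case split, and the two cases are genuinely different. If $x$ is divisible by some $p\le P$, then $e(p,Q)=\prod_{q\le P}q^{Q/2^\Delta}$ divides $e(x,Q)$, and since this factor already contains every small prime $q$ as a \emph{base} raised to a suitably divisible exponent, $e(x,Q)$ really is in $O$; note the divisibility by $n$ comes from the bases $q\le P$ appearing, not merely from the exponents lying in $O$. If $x$ is not divisible by any $p\le P$, one instead observes that all relevant entries of $\varepsilon(Q)$ are divisible by $Q/3$, so $e(x,Q)=\alpha^{Q/3}$ with $\alpha$ coprime to every $p\le P$, and then applies Euler's theorem: $\phi(m)$ divides $Q/3$ for every standard $m$ and every $m=2^{n\Delta}$, so $\alpha^{Q/3}\equiv 1\pmod m$, hence $e(x,Q)-1\in O$ and thus $e(x,Q)\in A$ (but $e(x,Q)\notin O$). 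This Euler-totient step is the actual content of the lemma and is what your proposal is missing.
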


\begin{pf}
Let $x,y\in A$, we want to prove $e(x,y)\in A$. Write $y=Q+\delta$ with $Q\in O$ and $\delta\in\mathbb Z$. It suffices to show that
$e(x, Q)\in A$, for then $e(x, y)=e(x, Q)\cdot x^\delta$ lies also in $A$. Also, we may further suppose that $Q\neq 0$.
Let us now distinguish two cases:

\noindent a) $x$ is divisible by some $p\leq\bprime$: \\
Then $e(x,Q)=e(p,Q)\cdot \alpha$ for some $\alpha\in B$ and $e(p,Q)=\prod_{\bprime\geq q\in \Primes} q^{Q/2^\Delta}$ and for $0<n\in\N$ clearly both 
$n$ and $2^{n\Delta}$ divide $e(p,Q)$. Therefore $e(x,Q)\in O \sbs A$.

\medskip

\noindent b) $x$ is not divisible by any $p\leq\bprime$: \\
By the definition of $\vep Q$ and $e$, in this case also $e(x, Q)$ will not be divisible by any $p\leq P$. 
Since all the entries of $\vep Q$ at positions $(p, q)$ with $p, q>P$
are divisible by $Q/3$, we see that $e(x, Q)=\alpha^{Q/3}$ with $\alpha\in B$ not divisible by any $p\leq P$.

Now note that $\phi(m)$ divides $Q/k$ (here $\phi$ is Euler's totient function) for every $0<m\in\N$ or $m=2^{n\Delta}$. Since $\alpha$ and $m$ are co-prime, we get
$\alpha^{Q/3} \equiv 1 \mod m$ and hence $\alpha^{Q/3}-1\in O$.
Thus $e(x, Q)=\alpha^{Q/3}\in A$.
\end{pf}

From now on denote $\Restr{e}{A\times A}$ just by $e$.

\begin{rmk}
Before discussing Fermat's Last Theorem, observe that various usual elementary number-theoretic statements are not valid with the new exponential $e$, for example Fermat's Little Theorem: Fix $Q\in O$, choose a prime $p=aQ-1>P$ and consider $e(2, p-1)$. By the definition of $e$ we have 
$$4e(2, p-1)=e(2, p+1)=e(2, aQ)=N^{aQ/2^\Delta},$$
where $N=\prod_{q\leq P} q$ is the product of all primes $q$ smaller than our fixed non-standard prime $P$.
Hence 
$$(4e(2, p-1))^{2^\Delta}=N^{aQ}=N^{p+1}\equiv N^2 \pmod p$$ by usual Fermat's Little Theorem in $\mathcal B$. If Fermat's Little Theorem held for $e$, we would have $e(2, p-1)\equiv 1\pmod p$, and so
$$4^{2^\Delta}\equiv N^2\pmod p,$$ i.e., $p| 4^{2^\Delta} - N^2$. There are only finitely many (in the sense of $\mathcal B$) such primes $p$, but infinitely many primes in the arithmetic progression $aQ-1$, a contradiction.
\end{rmk}

Let us now finish the construction of our counterexamples to Fermat's Last Theorem.

\begin{lm}
For every $Q, R\in O$ and every triple $\tuple{p,q,r}\in S$ we have
$$
e(R\cdot 3p, Q + 1) + e(R \cdot 5q, Q + 1) = e(R \cdot 2r, Q + 1).
$$
\end{lm}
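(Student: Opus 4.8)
The plan is to reduce everything to the value of $e$ at $Q$ (rather than at $Q+1$) and then to a short computation with the matrix $\vep{Q}$. Since $Q\in O$ we have $O_{Q+1}=Q$ and $\delta_{Q+1}=1$, so axioms (e4) and (e3) give
$$e(x,Q+1)=e(x,Q)\cdot e(x,1)=x\cdot e(x,Q)\qquad\text{for all }x\in B.$$
Applying this with $x=R\cdot 3p,\ R\cdot 5q,\ R\cdot 2r$, the claimed identity becomes
$$3pR\cdot e(R\cdot 3p,Q)+5qR\cdot e(R\cdot 5q,Q)=2rR\cdot e(R\cdot 2r,Q);$$
in view of the defining relation $3p+5q=2r$ of the triple $\tuple{p,q,r}\in S$, it therefore suffices to prove that the three values $e(R\cdot 3p,Q)$, $e(R\cdot 5q,Q)$ and $e(R\cdot 2r,Q)$ are \emph{equal}.

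To see this I would split each of them by multiplicativity (e5):
$$e(R\cdot 3p,Q)=e(R,Q)\cdot e(3,Q)\cdot e(p,Q),$$
and similarly $e(R\cdot 5q,Q)=e(R,Q)\cdot e(5,Q)\cdot e(q,Q)$ and $e(R\cdot 2r,Q)=e(R,Q)\cdot e(2,Q)\cdot e(r,Q)$. So it remains to evaluate $e$ at a prime $s$, which one reads straight off the definition \refer{def:e}: as $v(s)$ has a $1$ in coordinate $s$ and $0$ elsewhere, $e(s,Q)=v^{-1}(\vep{Q}\,v(s))$ is determined by the $s$-th column of $\vep{Q}$. For $s\in\{2,3,5\}$ we have $s\leq\bprime$ and $s$ lies in no triple of $S$, so that column equals $Q/2^\Delta$ on every row $\leq\bprime$ and $0$ elsewhere; hence $e(2,Q)=e(3,Q)=e(5,Q)=N^{Q/2^\Delta}$ with $N=\prod_{q\leq\bprime}q$. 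For $s\in\{p,q,r\}$ with $\tuple{p,q,r}\in S$ we have $s>\bprime$, and since the triples in $S$ are pairwise disjoint, $s$ lies in the single triple $\tuple{p,q,r}$; so that column equals $Q/3$ on the rows $p,q,r$ and $0$ elsewhere, whence $e(p,Q)=e(q,Q)=e(r,Q)=(pqr)^{Q/3}$. (Both $Q/2^\Delta$ and $Q/3$ are meaningful because every element of $O$ is divisible by $2^\Delta$ and by $3$.)

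Putting this together, each of $e(R\cdot 3p,Q)$, $e(R\cdot 5q,Q)$, $e(R\cdot 2r,Q)$ equals $e(R,Q)\cdot N^{Q/2^\Delta}\cdot(pqr)^{Q/3}$ -- the ``small prime'' factor is always $N^{Q/2^\Delta}$ and the ``triple'' factor is always $(pqr)^{Q/3}$ -- so they coincide, and the lemma follows. The only step that needs genuine care is the column bookkeeping: one must check that none of the exceptional clauses in the definition of $\vep{Q}$ interferes, in particular that the diagonal entry $\veps{Q}{p}{p}$ for $p$ in a triple is governed by the ``$Q/3$'' clause (not the ``$Q$'' one) and that the columns indexed by $2,3,5$ pick up nothing from the ``triple'' clause; disjointness of the triples in $S$ is exactly what keeps the columns indexed by $p,q,r$ clean. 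The degenerate cases $R=0$ or $Q=0$ are covered verbatim by the computation above (reading $e(R,Q)=0$ when $R=0\neq Q$, and $e(x,0)=1$), or follow at once from $3p+5q=2r$.
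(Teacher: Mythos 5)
Your proof is correct and follows essentially the same route as the paper's: reduce via (e3)--(e4) to showing that $e(R\cdot 3p,Q)$, $e(R\cdot 5q,Q)$, $e(R\cdot 2r,Q)$ all equal a common value, split each by multiplicativity (e5), and then read off $e(2,Q)=e(3,Q)=e(5,Q)$ and $e(p,Q)=e(q,Q)=e(r,Q)=(pqr)^{Q/3}$ directly from the definition of $\vep{Q}$. The paper's proof is a compressed version of yours (it denotes the common value $K$ and does not spell out the column bookkeeping or the explicit value $N^{Q/2^\Delta}$ of the ``small prime'' factor), but the underlying computation is identical.
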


\begin{pf}
Note that $e(2, Q)=e(3, Q)=e(5, Q)$ and $e(p, Q)=e(q, Q)=e(r, Q)=(pqr)^{Q/3}$. Thus
we have $e(R\cdot 3p, Q) = e(R\cdot 5q, Q) = e(R\cdot 2r, Q) = e(R,Q)\cdot e(2, Q)\cdot(pqr)^{Q/3}=:K$. 
Then $e(R\cdot 3p, Q+1)=3pKR$, $e(R\cdot 5q, Q+1)=5qKR$, and $e(R\cdot 2r, Q+1)=2rKR$ and the Lemma follows from
$3p+5q=2r$.
\end{pf}

Let us note that while $p,q,r$ may not be in $A$, $R \cdot 3p$, $R \cdot 5q$, $R \cdot 2r$ certainly are in $A$.

\medskip

We summarize our observations as the following:

\begin{thm}\ 
\label{thm:FLT}
\begin{enumerate}[1)]
\item There is a model $\stru{\B}{e}\models \Th{\N} + Exp$ containing an unbounded set $E\sbs B$ of exponents and (in every coordinate) unbounded set $T\sbs B^3$ of pairwise linearly independent triples $\tuple{a,b,c}$ such that for every $n\in E$ and $\tuple{a,b,c}\in T$ we have
$$
e(a,n)+e(b,n)=e(c,n).
$$
Moreover:
\bitem
\item For any fixed $y$, $e(x,y)$ is a definable function of $x$ in $\B$.
\item $e$ is definable in the expansion $\stru{\B}{\mathcal{N}}$ of $\B$ by a predicate $\mathcal{N}(x)$ expressing \uvz{$x$ is a standard number}.
\eitem
\item There is a substructure $\stru{\A}{e}\sbs\stru{\B}{e}$ with $e$ total and $\A\models\PrA$ such that $E\sbs A$, $T\sbs A^3$. (Thus, in addition to axioms of $\PrA$, $\stru{\A}{e}$ satisfies all quantifier-free statements true in $\stru{\B}{e}$.)
\end{enumerate}
\end{thm}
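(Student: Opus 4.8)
The plan is to assemble Theorem \ref{thm:FLT} directly from the pieces already established in Section \ref{sect:violationFLT}, so the work is mostly bookkeeping. For part 1), I would start from a nonstandard $\B\models\Th{\N}$ and run the Construction of Section \ref{sect:FLTviolconstruction}: fix a nonstandard $\Delta$, let $\bprime$ be the $2^\Delta$-th prime, and use Lemma \ref{balog} to produce the definable set $S\subseteq B^3$ of pairwise disjoint prime triples with $3p+5q=2r$ and $p,q,r>\bprime$, unbounded in every coordinate. Then take $\A$ with universe $A=\set{Q+z}{z\in\Z,\ n\mid Q,\ 2^{n\Delta}\mid Q\ \text{for all}\ 0<n\in\N}$. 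By Lemma \ref{pres}, $\A\models\PrA$. Define $\vep{Q}$ for $Q\in O$ by the four-case formula, verify via the intermediate lemma that $\vepe:\mathcal O\to\Pmat$ is a $\lang{0,+,\dotminus,\cdot}$-embedding with $\vep{Q}\in\Pmat$, and extend by \refer{def:epsilon}; Lemma \ref{lm:vep} gives that the extended $\vepe:A\to\Pmat$ is a semiring homomorphism. Proposition \ref{pro:exp} then yields an exponential $e=e^{\vepe}:B\times A\to B$ with $\stru{\B}{e}\models Exp$ (axiom (e0) holds because $\A\models\PrA$, and (e1)--(e7) hold by the Proposition). So $\stru{\B}{e}\models\Th{\N}+Exp$.

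For the counterexamples themselves, I would set $E=\set{Q+1}{0\neq Q\in O}$ and $T=\set{\tuple{R\cdot 3p,\,R\cdot 5q,\,R\cdot 2r}}{0\neq R\in O,\ \tuple{p,q,r}\in S}$. The final Lemma of the section gives $e(a,n)+e(b,n)=e(c,n)$ for all $n\in E$, $\tuple{a,b,c}\in T$. Unboundedness of $E$ in $B$ is clear since $O$ is unbounded (it contains arbitrarily large elements divisible by all $n$ and $2^{n\Delta}$); unboundedness of $T$ in every coordinate follows because $S$ is unbounded in every coordinate and we may also scale by $R$. For pairwise linear independence of the triples in $T$: two triples coming from distinct $\tuple{p,q,r},\tuple{p',q',r'}\in S$ involve disjoint sets of primes, so no nonzero scalar multiple (even in the fraction field) relates them; and the exponents $n=Q+1$ all exceed $2$, so these are genuine violations of FLT for $e$ in the stated sense. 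The two "moreover" bullets were already observed in the text: definability of $x\mapsto e(x,y)$ in $\B$ for fixed $y$ follows from Proposition \ref{pro:exp}(b) together with the definability of $S$ (hence of $\vep{y}$) in $\B$; and definability of $e$ in $\stru{\B}{\mathcal N}$ follows from Proposition \ref{pro:exp}(a) once we observe that with the predicate $\mathcal N$ both $A$ and the map $y\mapsto O_y$ become definable.

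For part 2), I would take $\A$ as constructed and invoke the Lemma showing $\Restr{e}{A\times A}:A\times A\to A$, so that $e$ is total on $A$; rename this restriction $e$. Then $\stru{\A}{e}$ is a substructure of $\stru{\B}{e}$, $\A\models\PrA$ by Lemma \ref{pres}, and by construction $E\subseteq A$ (each $Q+1$ with $Q\in O$ lies in $A$) and $T\subseteq A^3$ (as noted after the last Lemma, the products $R\cdot 3p$ etc.\ lie in $A$ even though $p,q,r$ need not). Finally, since $\stru{\A}{e}$ is a substructure of $\stru{\B}{e}$, every quantifier-free (equivalently, every universal) sentence true in $\stru{\B}{e}$ holds in $\stru{\A}{e}$, which gives the parenthetical remark.

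The main obstacle is nothing deep at this stage --- all the real content (the number theory of Lemma \ref{balog}, the algebra of Proposition \ref{pro:exp}, the totality argument, and the arithmetical closure making $\A\models\PrA$) has been done in the preceding lemmas. The one point requiring genuine care is the correct quantifier order in "unbounded in every coordinate" for $T$ and the verification that the triples are pairwise linearly independent; this reduces to the disjointness of the prime triples in $S$ and to the fact that every exponent used is $>2$, so it is routine but should be spelled out.
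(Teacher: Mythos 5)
Your proposal follows the paper's route exactly; the theorem really is a summary of the earlier lemmas of Section \ref{sect:violationFLT}, and the paper does not give a separate proof, so the bookkeeping you describe is precisely what is intended. The derivation of axiom (e0) from Lemma \ref{pres}, of (e1)--(e7) from Lemma \ref{lm:vep} and Proposition \ref{pro:exp}, of the two \uvz{moreover} bullets from Proposition \ref{pro:exp} together with definability of $S$ and of $A$ and $y\mapsto O_y$ in $\stru{\B}{\mathcal N}$, and of part 2) from the totality lemma and preservation of quantifier-free sentences under substructures, are all correct.

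There is, however, a small but genuine slip in the explicit choice of $T$. You set $T=\set{\tuple{R\cdot 3p,\,R\cdot 5q,\,R\cdot 2r}}{0\neq R\in O,\ \tuple{p,q,r}\in S}$ with \emph{both} $R$ and $\tuple{p,q,r}$ varying. Two triples arising from the same $\tuple{p,q,r}\in S$ but different scalars $R\neq R'$ are scalar multiples of each other, hence linearly dependent; your linear-independence argument (\uvz{two triples coming from distinct $\tuple{p,q,r},\tuple{p',q',r'}\in S$ involve disjoint sets of primes, ...}) only covers triples coming from distinct elements of $S$, and does not rule out this case. The fix is to freeze $R$: pick one nonzero $R\in O$ and take $T=\set{\tuple{R\cdot 3p,\,R\cdot 5q,\,R\cdot 2r}}{\tuple{p,q,r}\in S}$. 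Unboundedness in every coordinate already follows from the corresponding property of $S$ (no scaling is needed), and pairwise linear independence then really does reduce to the disjointness of the prime triples in $S$, as you argued. The rest is fine; in particular your choice $E=\set{Q+1}{0\neq Q\in O}$ correctly excludes $Q=0$ so that all exponents are nonstandard, hence $>2$.
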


To construct $e$, we used the method described in Section \ref{construction}. Then, necessarily, by Remark \ref{Onotalways}, $A\neq B$, i.e., $e$ is not total on $\B$. 
In general, it is possible to construct a total $e$ by producing a homomorphism $\vepe: B\rightarrow \Pmat$ in a way different from the method of Section \ref{construction} (e.g., see Example \ref{ex:exp}). However, ensuring that Fermat's Last Theorem for $e$ does not hold in the resulting expansion $\stru{\B}{e}$, seems to be a harder question.

\begin{open}
\label{open:totality}
For which arithmetical theories $S$ does there exist a model $\stru{\B}{e}\models S + Exp$ $+$ \uvz{$e$ is total} such that Fermat's Last Theorem for $e$ does not hold in $\stru{\B}{e}$?
In particular, is there such a model for $S = \Th{\N}$?
\end{open}

Let us note that the problem above makes sense only for sufficiently strong theories $S$, since two of the axioms from $Exp$ ((e5) and (e7)) use coding in their formulations. However, if we remove (e7) and replace (e5) with its finite version (e5'), then part 2) of Theorem \ref{thm:FLT} gives the positive answer for $S=\PrA\ +$ \uvz{all open formulas true in the standard model $\N$}.

\section{Catalan's Conjecture}
\label{sect:Catalan}

We show that, unlike Fermat's Last Theorem, Catalan's Conjecture for $e$ (\uvz{the only solution of $e(a,n)-e(b,m)=1$ with $a,b,m,n> 1$ is $a=m=3$, $b=n=2$}) is provable in $\Th{\N} + Exp$. It follows that $\stru{\B}{e}$ and $\stru{\A}{e}$ from Theorem \ref{thm:FLT} are examples of models where FLT for $e$ does not hold but Catalan's Conjecture for $e$ does.

In fact, we can show something slightly stronger, as we need only the axioms (e0) -- (e4) for the exponential function (we denote this set of axioms $Exp^{\prime}$) and we can allow weaker theories than $\Th{\N}$. We mainly need that ABC and Catalan's Conjectures (for the \uvz{original}, definable exponential) hold in our theory.

\bigskip

To briefly review the statement of the ABC Conjecture, let $\mathcal B$ be a model of $\ISone$. Then every element $a$ of $\mathcal B$ has a unique prime factorization
and we can define its radical $\rad a$ as the product of all primes dividing $a$ (discounting multiplicities, i.e., $\rad{24}=6$).
One of the formulations of the ABC Conjecture is:

\begin{conj}[ABC Conjecture]
For every $\vepe>0$ there is $K_\vepe$ such that for all coprime $a, b, c$ with $a+b=c$ we have $c<K_\vepe\rad{abc}^{1+\vepe}$.
\end{conj}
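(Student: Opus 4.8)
The displayed statement is the classical \emph{ABC Conjecture} of Masser and Oesterl\'{e}; it is one of the most famous open problems in number theory, and no proof of it is given in this paper. The honest ``proof proposal'' is therefore to explain in what sense it figures here: it is introduced as a working hypothesis, not a theorem, and every downstream number-theoretic result (the provability of Catalan's Conjecture for $e$ in Theorem~\ref{thm:Catalan}, and of Fermat's Last Theorem for coprime $e$ in Theorem~\ref{thm 6.1}) will be stated \emph{conditionally on ABC holding in the standard model $\N$}, in exactly the same spirit in which we invoke Catalan's Conjecture itself through Mih\u{a}ilescu's theorem. So what has to be done is not to prove the conjecture but to record its status and to make sure the assumption is used consistently.

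The plan is as follows. First I would note that the only candidate proof of the full conjecture presently in the literature is Mochizuki's inter-universal Teichm\"uller theory \cite{Moc12}, whose correctness has not been completely verified; hence we claim nothing. Second, I would point out that for our applications it is enough to assume ABC over $\N$ alone, and then to observe the transfer principle we shall use repeatedly: since $\Th{\N}$ is complete, any $L$-statement ($L=\lang{0,1,+,\cdot,\leq}$) that follows from ABC in $\N$ is a theorem of $\Th{\N}$ and so holds in every background model $\B\models\Th{\N}$. The quantitative data we actually need downstream, namely a bound $c<K_\vepe\,\rad{abc}^{1+\vepe}$ for each standard $\vepe$ with a standard constant $K_\vepe$, is of precisely this form, because the hypothesis ($a,b,c$ coprime, $a+b=c$) and the conclusion involve only $+,\cdot,\leq$ together with the radical function, which is definable already in $\ISone$. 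Third, I would remark that $\ISone$ suffices to develop unique factorization and hence to \emph{formulate} $\rad{a}$ and the whole statement over any $\B\models\ISone$, so the conjecture is meaningful at that level of strength even though we assert (a conditional use of) it only for $\B\models\Th{\N}$.

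The main obstacle is thus not a gap in an argument but the status of the conjecture itself: a genuine proof of ABC is out of reach of the methods used here, so the intended reading of the displayed line is that it introduces a hypothesis, to be discharged (if ever) by future work. With that hypothesis granted for $\N$, everything that follows in Sections~\ref{sect:Catalan} and~\ref{sect:coprime} becomes an implication from it together with the structural facts about $e$ already established (Proposition~\ref{pro:exp} and the construction of Section~\ref{construction}); in particular the combination ``ABC in $\N$'' $+$ ``Catalan in $\N$'' $+$ $Exp^\prime$ will yield Catalan's Conjecture for $e$, and ``ABC in $\N$'' $+$ coprimality will yield Fermat's Last Theorem for $e$.
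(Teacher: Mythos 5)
You are right: the paper gives no proof of this statement — it is the open ABC Conjecture of Masser and Oesterl\'{e}, stated only as a hypothesis (with a footnote on Mochizuki's unverified announcement) and folded into the assumptions on the theory $S$ (resp.\ $T$) used in Theorems~\ref{thm:Catalan} and~\ref{thm 6.1}. Your reading of its role, including the transfer to nonstandard models via completeness of $\Th{\N}$, matches the paper's treatment exactly.
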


Let us note that Mochizuki has recently announced a proof in the standard model \cite{Moc12}.

\medskip

In the rest of this Section, let $S$ be a theory (in the language of arithmetic $\lang{0,1,+,\cdot,\leq}$) stronger than $\ISone$ such that, for some $K\in\N$, $S$ proves $(\uvz{a,b,c\,$ coprime}$\land a+b=c) \limp c<K\rad{abc}^{1+1/3}$, and Catalan's conjecture (using the exponential $x^y$ definable in $S$). 
By Mochizuki's and Mih\u{a}ilescu's results, we may take $S=\Th{\N}$. 
(We may also conjecture that PA satisfies the property above and take $S=$PA.)

We prove the following:

\begin{thm}
\label{thm:Catalan}
Let $S$ be as above. Catalan's Conjecture for $e$ is provable in $S + Exp^{\prime}$.
\end{thm}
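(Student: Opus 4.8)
The plan is to argue via the Completeness Theorem: it suffices to show that Catalan's Conjecture for $e$ holds in an arbitrary model $\stru{\B}{e}\models S+Exp^{\prime}$, so I fix such a model and a putative solution $a,b\in B$, $m,n\in A$ with $a,b,m,n>1$ and $e(a,n)=e(b,m)+1$, and aim to force $(a,n,b,m)=(3,2,2,3)$. (That this is a solution is immediate from (e3)--(e4): $e(3,2)-e(2,3)=3^{2}-2^{3}=1$, the standard numbers $2,3$ lying in $A$.) First I would record the elementary consequences of (e1)--(e4) obtained by finitely many applications of the axioms: $e(x,z)=x^{z}$ (the $\B$-definable power) for standard $z$; $e(x,z)=e(x,z-j)\cdot x^{j}\geq x^{j}$ for standard $j\leq z$ and $x\neq0$, so that $e(x,z)$ is nonstandard once $z\in A$ is nonstandard and $x\geq2$; and that $e(a,n)$ and $e(b,m)=e(a,n)-1$ are coprime, with $e(a,n),e(b,m)\geq2$.

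The argument then splits. If $m$ and $n$ are both standard, the equation reads $a^{n}-b^{m}=1$ with $a,b,m,n>1$, and since $S$ proves Catalan's Conjecture for the definable exponential and $\B\models S$, we get $a=3$, $n=2$, $b=2$, $m=3$, i.e.\ $a=m=3$, $b=n=2$, as wanted. The substantial case is when at least one of $m,n$ is nonstandard; here I would derive a contradiction from ABC. The key point is a radical bound: using that $\A\models\PrA$, divide the exponent by $9$, writing $z=9z'+r$ with $z'\in A$ and $0\leq r\leq8$, so that $e(x,z)=e(x,z')^{9}\cdot x^{r}$ by (e4). Since $\rad{e(x,z)}$ divides $\rad{e(x,z')}\cdot\rad{x}\leq e(x,z')\cdot x\leq e(x,z')^{2}$ (for $z'\geq1$) while $e(x,z)\geq e(x,z')^{9}$, one obtains $\rad{e(x,z)}^{9}\leq e(x,z)^{2}$ whenever $z$ is nonstandard, and (trivially) $\rad{e(x,z)}^{2}\leq e(x,z)$ for standard $z\geq2$. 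Applying the strong bound to whichever of $m,n$ is nonstandard and the available bound to the other, and using $e(b,m)<e(a,n)$, a short computation gives $\rad{e(b,m)\cdot e(a,n)}^{18}\leq e(a,n)^{13}$. Feeding this into the instance of ABC provable in $S$ (exponent $1+1/3$, with its standard constant $K\in\N$), which reads $e(a,n)^{3}<K^{3}\rad{e(b,m)\cdot e(a,n)}^{4}$, and raising to the ninth power, yields $e(a,n)^{27}<K^{27}\,e(a,n)^{26}$, hence $e(a,n)<K^{27}$. But $e(a,n)$ is nonstandard (since $n$ or $m$ is, making $e(a,n)$ or $e(b,m)=e(a,n)-1$ nonstandard), while $K^{27}\in\N$ — a contradiction. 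So this case is impossible, and the proof is complete.

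The main obstacle is calibrating the radical bound against the weak form of ABC available in $S$: since its exponent is only $1+\varepsilon=4/3$, estimating both $\rad{e(a,n)}$ and $\rad{e(b,m)}$ merely by square roots would give the useless total exponent $\tfrac{4}{3}(\tfrac12+\tfrac12)>1$. What rescues the argument is that a nonstandard exponent makes $e(x,z)$ a perfect ninth power up to a bounded factor, so its radical drops to $e(x,z)^{2/9}$, and then $\tfrac{4}{3}(\tfrac29+\tfrac12)=\tfrac{26}{27}<1$ — exactly what forces $e(a,n)$, hence the whole solution, to lie in $\N$, thereby reducing everything to Catalan's theorem inside $\B$. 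The remaining work — that the definable power behaves as expected in $\ISone\subseteq S$, that division by $9$ in $\A$ is legitimate, and the radical/coprimality bookkeeping — is routine.
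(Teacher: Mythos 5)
Your argument is correct and follows essentially the same route as the paper's (Lemma \ref{rad} together with Proposition \ref{pro:catalan}): bound the radical of $e(x,z)$ by splitting off a large perfect power when the exponent $z$ is nonstandard, feed this into the $\vepe=1/3$ instance of ABC to rule out nonstandard exponents, and reduce to Catalan's theorem in $\B$. The only differences are bookkeeping: the paper proves the sharper Lemma \ref{rad}~b) (that $K\rad{e(x,y)}^{n}<e(x,y)$ for all standard $K,n$, by dividing the exponent by $n+1$) and plugs in $n=4$ to close a circular strict inequality $e(x,a)e(y,b)^{2}<e(x,a)e(y,b)^{2}$, whereas you fix division by $9$ to get $\rad{e(x,z)}^{9}\leq e(x,z)^{2}$ and conclude $e(a,n)<K^{27}$, contradicting nonstandardness --- the underlying mechanism and case split are identical.
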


Let $\stru{\B}{e}$ be an arbitrary model of $S+Exp^{\prime}$ and $\A\models\PrA$ be a substructure of $\B$ such that $e: B\times A \rightarrow B$. 
Since we are working with the  weaker set of axioms $Exp^{\prime}$, the exponential need not be given using Proposition \ref{pro:exp} nor the construction from Section {\ref{construction}}. However, we still have the following Lemma.

\begin{lm}\label{rad}
Let $1<x,y\in A$. 
\begin{enumerate}[a)]
\item If $y$ is standard, then  $\rad{e(x, y)}^2 \leq e(x, y)$. 
\item If $y$ is non-standard, then $K\rad{e(x, y)}^n<e(x, y)$ for all standard $K, n$.
\end{enumerate}
\end{lm}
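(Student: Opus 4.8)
The plan is to exploit the fact that the weak axioms $Exp^{\prime}$ already determine $e$ completely on \emph{standard} exponents and, more importantly, force $e(x,q)$ to be ``infinitely large'' as soon as $q$ is nonstandard. First I would record two elementary consequences of (e3) and (e4). By an external induction on a standard $k\ge 1$ one gets $e(x,k)=x^k$ (the definable exponential of $\B$), from $e(x,1)=x$ and $e(x,k+1)=e(x,k)\cdot e(x,1)$, and $e(x,0)=1$ by (e1). Second, and crucially: if $q\in A$ is nonstandard and $k\in\N$, then (Pr2)/(Pr7) give $q-k\in A$ with $q-k\ge 1$, so $e(x,q)=e(x,(q-k)+k)=e(x,q-k)\cdot x^k$; since $e(x,q-k)\ne 0,1$ by (e1) and (e2) we have $e(x,q-k)\ge 2$, hence
\[
e(x,q)\ \ge\ 2\,x^{k}\qquad\text{for every standard }k ,
\]
so in particular $e(x,q)$ is larger than every standard number.

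Part a) is then immediate: for $y\in\N$ with $y\ge 2$ we have $e(x,y)=x^y$, so $\rad{e(x,y)}=\rad{x}\le x$ and therefore $\rad{e(x,y)}^{2}\le x^{2}\le x^{y}=e(x,y)$.

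For part b), fix standard $K$ and $n$ and set $m=n+2$. By (Pr8) applied to $y$ and $m$ we may write $y=mq+r$ with $q\in A$ and $0\le r<m$; since $y$ is nonstandard and $m$ standard, $q$ is nonstandard. Using (e4) repeatedly,
\[
e(x,y)\ =\ e(x,mq)\cdot e(x,r)\ =\ e(x,q)^{m}\cdot x^{r},
\]
so $e(x,q)^m$ divides $e(x,y)$ while a power does not change which primes occur; hence $\rad{e(x,y)}=\rad{e(x,q)\cdot x^{r}}\le \rad{e(x,q)}\cdot x\le e(x,q)\cdot x$. Raising to the $n$-th power and using $x^{n}\le e(x,q)$ from the displayed inequality gives $\rad{e(x,y)}^{n}\le e(x,q)^{n}x^{n}\le e(x,q)^{n+1}$. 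Finally, $K<e(x,q)$, so
\[
K\,\rad{e(x,y)}^{n}\ \le\ K\,e(x,q)^{n+1}\ <\ e(x,q)^{n+2}\ =\ e(x,q)^{m}\ \le\ e(x,q)^{m}x^{r}\ =\ e(x,y),
\]
which is the required inequality.

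There is no serious obstacle; the only things that need care are bookkeeping. One must check that every exponent appearing above ($q$, $q-k$, $mq$, $r$) genuinely lies in $A$ — which is precisely where $\A\models\PrA$ enters, via (Pr2)/(Pr7)/(Pr8) — and one must read ``for all standard $K,n$'' in (b) as an external schema of statements (one choice $m=n+2$ per pair $(K,n)$) rather than as a single first-order sentence. The conceptual core is the single observation that for nonstandard $q$ the value $e(x,q)$ dominates every power $x^{k}$, which makes the radical of any $e$-value negligible compared with the value itself.
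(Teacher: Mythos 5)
Your proof is correct and follows essentially the same route as the paper's: decompose a nonstandard $y$ (via (Pr8)) as a standard remainder plus $m$ times a nonstandard $q\in A$, use (e4) to write $e(x,y)$ as $e(x,q)^m$ times a standard power of $x$, bound the radical by $x\,e(x,q)$, and absorb $K$ using the fact that $e(x,\cdot)$ at a nonstandard argument dominates every standard number. The only cosmetic difference is the choice of divisor ($n+2$ rather than the paper's $n+1$, with the paper instead using the spare $x^{m}$-factor to make room), which does not change the argument.
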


\begin{pf}
a) If $y$ is standard, then $e(x, y)=x^y$ by (e3) and (e4). Hence $\rad{e(x, y)}^2\leq x^2\leq x^y$.

b) Assume that $y$ is non-standard and fix standard $K, n$. Since $\mathcal A\models \PrA$ and $y$ is non-standard, we can 
write $y=n+a$ with $a\in A$ non-standard, and then $a=(n+1)b+m$ with $b\in A$, $0\leq m\leq n$ (by (Pr8)).

We then have $e(x, y)=e(x, n+m+(n+1)b)=x^{n+m}e(x, b)^{n+1}$, and so $\rad{e(x, y)}\leq \rad{x}\rad{e(x, b)}\leq x e(x, b)$.
Thus $K\rad{e(x, y)}^n\leq K x^n e(x, b)^n\leq x^n e(x, b)^{n+1}\leq  x^{n+m}e(x, b)^{n+1}=e(x, y)$ 
(we have used that $K\leq e(x, b)$ for $x > 1$ and $b$ non-standard, which follows from (e3) and (e4)).
\end{pf}

\begin{pro}
\label{pro:catalan}
Catalan's conjecture for $e$ holds in $\stru{\B}{e}$.
\end{pro}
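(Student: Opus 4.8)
The plan is to argue by cases according to whether the exponents $n,m$ are standard, reducing the standard case to the classical Catalan theorem (available in $S$) and ruling out the remaining case via the ABC estimate together with Lemma~\ref{rad}. So suppose $e(a,n)-e(b,m)=1$ with $a,b,m,n>1$ in $\stru{\B}{e}$, and put $d=e(b,m)$ and $c=e(a,n)$, so that $d+1=c$ with $d>1$; as $d,c$ are consecutive we have $\gcd(d,c)=1$, and trivially $\gcd(d,1)=1$.

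First, if both $n$ and $m$ are standard, then (e3) and (e4) give $e(a,n)=a^n$ and $e(b,m)=b^m$ (by an external induction on the standard exponent), so the hypothesis reads $a^n-b^m=1$ with $a,b,m,n>1$. Since $\B\models S$ and $S$ proves Catalan's conjecture for the definable exponential $x^y$, the only solution is $a=3,\ n=2,\ b=2,\ m=3$, i.e. $a=m=3$, $b=n=2$. Conversely $e(3,2)-e(2,3)=3^2-2^3=1$, so this is genuinely a solution; this settles the case.

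Now suppose at least one of $n,m$ is nonstandard; I expect to reach a contradiction. Applying the ABC estimate provable in $S$ (with $\vepe=1/3$) to the coprime triple $(d,1,c)$ with $d+1=c$, and using $\rad{d\cdot 1\cdot c}\leq\rad d\cdot\rad c$, we obtain
\[
c<K\,(\rad d\cdot\rad c)^{4/3}.
\]
Since $n\geq 2$ and $m\geq 2$, Lemma~\ref{rad} (part~a) if the exponent is standard, part~b) otherwise; its proof works verbatim for $x\in B$) bounds $\rad c^{2}\leq c$ and $\rad d^{2}\leq d<c$, and for whichever of $n,m$ is nonstandard --- say $n$, the case of $m$ being entirely symmetric --- part~b) of Lemma~\ref{rad} applied with the standard power $5$ gives the sharper $\rad c^{5}<c$. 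Feeding these into the displayed inequality (to stay inside the model one clears denominators, e.g.\ by raising to the fifteenth power: $c^{15}<K^{15}\rad d^{20}\rad c^{20}=K^{15}(\rad d^{2})^{10}(\rad c^{5})^{4}<K^{15}d^{10}c^{4}<K^{15}c^{14}$) yields $c<K^{15}$, a standard number. But $c$ is nonstandard: if $n$ is nonstandard then $c=e(a,n)\geq a^{k}\geq 2^{k}$ for every standard $k$ (split a standard initial segment off the exponent using (e4), and use (e2) to see the remaining factor is $\geq 1$), and if $m$ is nonstandard then $d=e(b,m)$, hence $c=d+1$, is nonstandard for the same reason. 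This contradicts $c<K^{15}$.

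The only genuinely delicate step is the bookkeeping of exponents in the ABC bound: using only the trivial $\rad{\cdot}\leq\sqrt{\cdot}$ on both factors would give merely $c<Kc^{4/3}$, which is vacuous, so one really must invoke Lemma~\ref{rad}~b) to shrink the radical of the nonstandard-exponent factor below a fifth root --- and a fifth root is precisely what defeats $\vepe=1/3$ (in fact any $\vepe<1$ would do, with a correspondingly large power). Everything else --- coprimality of consecutive elements so that ABC is applicable, the identity $e(x,k)=x^{k}$ for standard $k$, and nonstandardness of $e(x,y)$ for $x>1$ and nonstandard $y\in A$ --- is routine from $Exp^\prime$ and $\A\models\PrA$.
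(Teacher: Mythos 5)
Your proof is correct and takes essentially the same approach as the paper's: the same case split on the standardness of the exponents, the same application of the ABC Conjecture with $\vepe=1/3$ to the coprime triple $(e(y,b),1,e(x,a))$, and the same use of Lemma~\ref{rad} to bound the radicals. The paper packages the contradiction a bit more tightly --- cubing once and absorbing $K$ into Lemma~\ref{rad}~b) to reach $e(x,a)e(y,b)^2<\cdots<e(x,a)e(y,b)^2$ in one chain --- whereas you raise to the fifteenth power and separately observe that $c$ must be nonstandard, but the underlying ideas are identical.
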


\begin{pf}
Assume that $e(x, a)-e(y, b)=1$, where $x, y, a, b > 1$. We distinguish several cases according to $a,b$.

1) If $a, b$ are both standard then this is just Catalan's conjecture in $\mathcal B$.

2) Assume $a$ is non-standard. By the ABC conjecture for $\vepe=1/3$ (which is provable in $S$) we have $e(x, a)<K\rad{e(x,a)e(y,b)}^{1+\vepe}$, and so using Lemma \ref{rad} we have (note that $3+3\vepe=4$)

$$e(x, a)e(y, b)^2<e(x, a)^3<K^3\rad{e(x,a)e(y,b)}^{3+3\vepe}\leq$$
  $$\leq (K^3\rad{e(x,a)}^{4}) \rad{e(y,b)}^{4}
  < e(x, a) e(y, b)^2,$$

a contradiction.  

Let us note that to show $\rad{e(y,b)}^4<e(y,b)^2$ (in the last inequality), we use Lemma \ref{rad} a) if $b$ is standard, or b) otherwise.

3) The case of $b$ non-standard is analogous -- we only need to start from $e(x, a)^2e(y, b)$ instead of $e(x, a)e(y, b)^2$.
\end{pf}

This proves Theorem \ref{thm:Catalan}.

\section{Coprimality}\label{sect:coprime}

One may naturally wonder what causes the difference between the validity of Fermat's Last Theorem and Catalan's Conjecture and how is it possible that Catalan's Conjecture holds even with our weak exponential. It appears to us that the main number-theoretic weakness of the exponential is the fact that
for coprime $x$ and $y$, the values $e(x, a)$ and $e(y, b)$ need not be coprime. In fact, we have crucially exploited this in the construction of our counterexamples to Fermat's Last Theorem in Section \ref{sect:FLTviolconstruction}.
However, this does not play any role when considering Catalan's Conjecture, as $e(x, a)-e(y, b)=1$ immediately forces $e(x, a)$ and $e(y, b)$ to be coprime (and then we can apply ABC Conjecture).

Let us thus consider the following additional axiom for the exponential $e$:

\medskip

(e8) \uvz{If $x$ and $y$ are coprime, then so are $e(x, a)$ and $e(y, b)$.}

\medskip

This is equivalent to all corresponding matrices $\varepsilon(a)$ being diagonal -- but as Example \ref{ex:exp} shows, the exponential can still be different from the usual one. 

In fact, such \uvz{diagonal} homomorphisms $\vepe:A\rightarrow\Pmat$ are exactly homomorphisms of the form $\vep{a}=diag(f_p(a);p\in\Prv)$, where $f_p:A\rightarrow B$ are homomorphisms and $diag$ denotes the diagonal matrix. Hence there are $|\mathrm{Hom}(\A,\B)|^\omega$ exponentials satisfying $Exp +$ (e8), namely those given as $e_f(\prod_i p_i^{e_i},a)=\prod_i p_i^{e_if_{p_i}(a)}$ with $f=(f_p;p\in\Prv)$ homomorphisms from $\A$ to $\B$.
(If $A$ is closed under subtraction, then by Remark \ref{rem:zerosmatrices} all homomorphisms $f_p$ are necessarily injective.)

Note that (e8) is still much weaker than induction for $e$. Indeed, only the usual exponential $x^y$ satisfies induction: if some exponential $e$ satisfied induction ($\Sigma_1$-induction would be enough), it would be total, and we could use the induction to prove that $e(x,y)=x^y$ for all $y$.

\

Then we have a direct analogue of Theorem \ref{thm:Catalan}: Let $T$ be a theory (in the language of arithmetic $\lang{0,1,+,\cdot,\leq}$) stronger than $\ISone$ such that, for some $K\in\N$ and some $\varepsilon>0$, $T$ proves $(\uvz{a,b,c\,$ coprime}$\land a+b=c) \limp c<K\rad{abc}^{1+\varepsilon}$, and Fermat's Last Theorem (using the exponential $x^y$ definable in $T$). 
We may again take $T=\Th{\N}$.

\begin{thm}\label{thm 6.1}
Let $T$ be a theory as above. Fermat's Last Theorem for $e$ is provable in $T + Exp^\prime +$ (e$5^\prime$) + (e8).
\end{thm}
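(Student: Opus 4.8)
The plan is to assume, for contradiction, a non-zero solution $e(a,n)+e(b,n)=e(c,n)$ with $n>2$, and to split on whether the exponent $n$ is standard. If $n$ is standard, iterating (e3) and (e4) gives $e(x,n)=x^n$ for $x\in\{a,b,c\}$ (a metatheoretic induction on the fixed standard number $n$), so $a^n+b^n=c^n$ with $n>2$ and $a,b,c\neq 0$; this contradicts Fermat's Last Theorem for the definable exponential, which $T$ proves and which therefore holds in $\B$. Hence all the difficulty is in the case that $n$ is nonstandard, where the goal is to apply the ABC estimate available in $T$ to the triple $A:=e(a,n)$, $B:=e(b,n)$, $C:=e(c,n)$; by (e1)--(e2) these are $\geq 1$ and satisfy $A+B=C$, so also $A,B<C$.

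Before invoking ABC I would carry out two reductions. First, with $d:=\gcd(a,b,c)$, (e$5^\prime$) gives $e(a,n)=e(d,n)\cdot e(a/d,n)$ and likewise for $b,c$; cancelling the nonzero factor $e(d,n)$ reduces to the case $\gcd(a,b,c)=1$, and then $e(c,n)=e(a,n)+e(b,n)\geq 2$ forces $c\geq 2$ by (e1). Second --- the core of the argument --- I would prove the identity $\gcd(e(u,n),e(v,n))=e(\gcd(u,v),n)$ for all $u,v\in B$: with $g=\gcd(u,v)$ and one use of (e$5^\prime$), $e(u,n)=e(g,n)\cdot e(u/g,n)$ and $e(v,n)=e(g,n)\cdot e(v/g,n)$, so $\gcd(e(u,n),e(v,n))=e(g,n)\cdot\gcd(e(u/g,n),e(v/g,n))$, and since $u/g$ and $v/g$ are coprime, (e8) forces the last gcd to be $1$. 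Applying this identity twice gives $\gcd(A,B,C)=e(\gcd(a,b,c),n)=e(1,n)=1$ by (e1). It is essential to work through single applications of (e$5^\prime$) here: $T+Exp^\prime$ has no induction for formulas mentioning $e$, so one must avoid any \uvz{product over all prime divisors} identity for $e$.

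With $A+B=C$, $A,B,C$ coprime and $A,B<C$, I would finish using ABC together with Lemma~\ref{rad}. Writing $\varepsilon=p/q$ with $p,q$ standard, the ABC bound in $T$ gives $C^q<K^q\rad{ABC}^{q+p}$. Since $\rad{ABC}\leq\rad A\cdot\rad B\cdot\rad C$ and $n$ is nonstandard, part (b) of Lemma~\ref{rad} --- applied to $c>1$, and trivially to whichever of $a,b$ equals $1$ --- gives $(\rad A\cdot\rad B\cdot\rad C)^m<ABC<C^3$ for every standard $m$. Taking $m=4(q+p)$ and raising the ABC inequality to the fourth power yields $C^{4q}<K^{4q}C^3$, hence $C^{4q-3}<K^{4q}$, and since $q\geq 1$ this gives $C<K^{4q}$ with $K^{4q}$ standard; but $C=e(c,n)$ with $c>1$ and $n$ nonstandard, so Lemma~\ref{rad}(b) (with exponent $1$) shows $C$ exceeds every standard number --- a contradiction.

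The step I expect to be the main obstacle is obtaining that $A,B,C$ are \emph{exactly} coprime (not merely that their gcd is small) with no induction for $L^e$-formulas available; the resolution is precisely the interplay of (e$5^\prime$), which peels off $\gcd(a,b)$ in a single step, and (e8), which disposes of the coprime cofactors --- this is exactly the property deliberately destroyed in the construction of Section~\ref{sect:FLTviolconstruction}, which is why (e8) is the right extra hypothesis. The remaining points are routine: reading the non-integer exponent $1+\varepsilon$ in the ABC bound through a rational $\varepsilon=p/q$ (and clearing denominators), and noting that the proof of Lemma~\ref{rad} uses only that the base exceeds $1$, so it applies although $a,b,c$ are a priori only in $B$.
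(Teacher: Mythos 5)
Your proposal is correct and follows essentially the same approach as the paper: reduce to the case $\gcd(x,y,z)=1$ via (e$5^\prime$), obtain mutual coprimality of $e(x,n),e(y,n),e(z,n)$ via (e8), reduce to nonstandard $n$ via ordinary FLT, and close with the ABC inequality together with Lemma~\ref{rad}(b). The one place you are more careful than the paper's terse write-up is the coprimality step: the paper passes directly from ``$x,y,z$ coprime'' to ``$e(x,n),e(y,n),e(z,n)$ mutually coprime by (e8)'', whereas (e8) only applies to \emph{pairwise} coprime arguments; your identity $\gcd(e(u,n),e(v,n))=e(\gcd(u,v),n)$ (obtained by one application of (e$5^\prime$) and one of (e8)) is exactly the clean way to justify that step without any $L^e$-induction, and your explicit handling of a rational $\varepsilon=p/q$ and of the degenerate case $a=1$ or $b=1$ in the radical estimate are routine but correctly observed.
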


Let us recall that $Exp^\prime$ denotes the axioms (e0) -- (e4).

\begin{pf}
The proof is analogous to that of Theorem \ref{thm:Catalan}, in fact it is a little easier:

Assume that $e(x, n)+e(y, n)=e(z, n)$.
First we use (e$5^\prime$) to divide the equation by $e(g, n)$, where $g$ is the greatest common divisor of $x, y, z$.
Thus we can restrict ourselves to the situation with $x, y, z$ coprime. Hence also $e(x, n),e(y, n)$ and $e(z, n)$ are mutually coprime by (e8).

By the usual Fermat's Last Theorem we can also assume that $n$ is non-standard.
By the ABC Conjecture and Lemma \ref{rad} b) we have 
$$e(z, n)<K\rad{e(x, n)e(y, n)e(z, n)}^{1+\varepsilon}\leq K \left[\rad{e(x, n)}\rad{e(y, n)}\rad{e(z, n)} \right]^{1+\varepsilon}\leq$$
$$\leq (e(x, n)e(y, n)e(z, n))^{1/3}<e(z, n),$$
a contradiction.
\end{pf}

Theorem \ref{thm 6.1} seems to suggest that (at the very least in the class of models we are considering) full mathematical induction for the exponential function is not necessary to prove Fermat's Last Theorem (and Catalan's conjecture), but rather that it suffices to have one particular consequence of it, namely the coprimality property (e8).

\

We find very interesting the question whether there is a model $\stru{\B}{e}\models\Th{\N}+Exp$ with $e$ total where FLT for $e$ does not hold (see Open Problem \ref{open:totality}). In the light of Theorem \ref{thm 6.1} we now see that such $e$ would have to be given by a \uvz{non-diagonal} $\vepe:B\rightarrow\Pmat$. We therefore state the following

\begin{open}
Is there a model $\B\models\Th{\N}$ (or at least of $\ISone$) that permits a semiring homomorphism $\vepe:B\rightarrow\Pmat$ with some values $\vep{b}$ non-diagonal?
\end{open}

\bibliography{biblio}{}
\bibliographystyle{amsalpha}

\end{document}